\newcommand{\lgw}{\longrightarrow}
\newcommand{\lgm}{\longmapsto}
\newcommand{\Supp}{\operatorname{Supp}}
\newcommand{\si}{\sigma}
\newcommand{\lb}{\llbracket}
\newcommand{\rb}{\rrbracket}
\newcommand{\ovl}{\overline}
\newcommand{\ord}{\operatorname{ord}}
\newcommand{\wdh}{\widehat}
\newcommand{\la}{\lambda}
\renewcommand{\L}{\mathbb{L}}
\newcommand{\Z}{\mathbb{Z}}
\renewcommand{\k}{\Bbbk}
\newcommand{\R}{\mathbb{R}}
\newcommand{\K}{\mathbb{K}}
\newcommand{\N}{\mathbb{N}}
\newcommand{\C}{\mathbb{C}}
\newcommand{\Q}{\mathbb{Q}}
\newcommand{\lcm}{\operatorname{lcm}}
\renewcommand{\a}{\alpha}
\renewcommand{\b}{\beta}
\newcommand{\g}{\gamma}
\renewcommand{\phi}{\varphi}
\renewcommand{\o}{\omega}
\newcommand{\ini}{\operatorname{in}}
\newcommand{\fract}[2]{\hbox{\leavevmode
  \kern.1em \raise .25ex \hbox{\the\scriptfont0 $#1$}\kern-.1em }\big/
  {\hbox{\kern-.15em \lower .5ex \hbox{\the\scriptfont0 $#2$}} }}
\theoremstyle{plain}
\newtheorem{theorem}{Theorem}[section]
\newtheorem{lemma}[theorem]{Lemma}
\newtheorem{corollary}[theorem]{Corollary}
\theoremstyle{definition}
\newtheorem{definition}[theorem]{Definition}
\newtheorem{example}[theorem]{Example}
\newtheorem{remark}[theorem]{Remark}
\newtheorem{question}[theorem]{Question}
\begin{document}

\title{About Eisenstein's Theorem}

\author{Guillaume Rond}




\keywords{algebraic power series}


\begin{abstract}
The aim of this survey papier is to present a result due to Eisenstein, to prove a generalized version of it, and to present some applications of this Eisenstein's Theorem, in particular to the study of the algebraic closure of the field of power series in several indeterminates.
\end{abstract}

\thanks{
This research was funded, in whole or in part, by lÕAgence Nationale de la Recherche (ANR), project ANR-22-CE40-0014. For the purpose of open access, the author has applied a CC-BY public copyright licence to any Author Accepted Manuscript (AAM) version arising from this submission.
}

\maketitle
\emph{Dedicated to Bernard Teissier who has always been a source of inspiration}\\
\section{Introduction}
On July 8, 1852, Gotthold  Eisenstein wrote a two pages long note \cite{Eis} whose aim is to state the following result:

\begin{theorem}[Eisenstein's Theorem]\label{thm:Eis}
Let $f\in\Q\lb x\rb$ be an algebraic power series where $x$ is a single indeterminate. Then there is an integer $a\in\N^*$ such that $af(ax)\in\Z\lb x\rb$.
\end{theorem}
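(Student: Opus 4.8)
Writing $f=\sum_{n\ge 0}c_nx^n$ with $c_n\in\Q$ and noting that $af(ax)=\sum_{n\ge 0}a^{n+1}c_nx^n$, the statement is equivalent to finding $a\in\N^*$ with $v_p(c_n)\ge-(n+1)v_p(a)$ for every prime $p$ and every $n$ (here $v_p$ is the $p$-adic valuation). I would deduce this from two facts: $(\mathrm a)$ $v_p(c_n)\ge 0$ for all $n$, for all but finitely many primes $p$ (i.e. $f\in\Z_{(p)}\lb x\rb$); and $(\mathrm b)$ for every prime $p$ there is $\kappa_p\ge 0$ with $v_p(c_n)\ge-\kappa_p(n+1)$ for all $n$. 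Granting these, $a:=\prod_{p}p^{\lceil\kappa_p\rceil}$ — the product over the finitely many primes excluded in $(\mathrm a)$ — works: $v_p(a^{n+1}c_n)\ge 0$ for those $p$ by $(\mathrm b)$, and for the remaining $p$ by $(\mathrm a)$, so $a^{n+1}c_n\in\Z$ for all $n$.

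For $(\mathrm b)$, recall that an algebraic power series is $D$-finite: since $\Q(x)[f]$ is a finite-dimensional $\Q(x)$-algebra stable under $d/dx$ (implicit differentiation gives $f'=-P_x(x,f)/P_y(x,f)\in\Q(x)[f]$, using that the minimal polynomial $P$ of $f$ over $\Q(x)$ is separable, so $P_y(x,f)$ is invertible in the field $\Q(x)[f]$), the derivatives $f,f',\dots,f^{(d)}$ are $\Q(x)$-linearly dependent, whence $\sum_{i=0}^{r}q_i(x)f^{(i)}=0$ for some $q_i\in\Z[x]$, $q_r\ne 0$. Extracting the coefficient of $x^m$ (using $[x^\ell]f^{(i)}=\frac{(\ell+i)!}{\ell!}c_{\ell+i}$) converts this into a recurrence
\[
Q(n)\,c_n=\sum_{j\ge 1}R_j(n)\,c_{n-j},\qquad Q,R_j\in\Z[X],\ Q\not\equiv 0 ,
\]
valid for all $n\ge N_0$ once $N_0$ is past the nonnegative integer roots of $Q$; here $Q(n)$ is the coefficient of the highest-index term $c_n$, a nonzero polynomial in $n$ (the indicial-type polynomial of the leading part of the differential equation). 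As the $R_j$ lie in $\Z[X]$, this gives $v_p(c_n)\ge\min_j v_p(c_{n-j})-v_p(Q(n))$, hence $\min_{i\le n}v_p(c_i)\ge\min_{i<N_0}v_p(c_i)-\sum_{k=N_0}^{n}v_p(Q(k))$. It remains to check that $\sum_{k\le n}v_p(Q(k))=O_p(n)$: writing it as $\sum_{j\ge 1}\#\{k\le n:\,p^j\mid Q(k)\}$ and being slightly careful — the naive bound $v_p(Q(k))\le\log_p|Q(k)|$ only yields $O_p(n\log n)$, whereas factoring $Q$ over $\overline{\Q_p}$ and using $\sum_{k\le n}v_p(k-\alpha)=O(n)$ gives the linear bound one needs.

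For $(\mathrm a)$, I would use Hensel's lemma. With $P\in\Z[x,y]$ a primitive integral representative of the minimal polynomial of $f$, separability gives that $\D(x):=\mathrm{Res}_y(P,P_y)\in\Z[x]$ is nonzero, together with an identity $UP+VP_y=\D$ with $U,V\in\Z[x,y]$; substituting $y=f$ yields $V(x,f)P_y(x,f)=\D(x)$, so $v:=\ord_x P_y(x,f)\le\ord_x\D<\infty$ and $P_y(x,f)=x^{v}u(x)$ with $u(0)\ne 0$. Fix $M:=2\ord_x\D$ and the truncation $g:=\sum_{n\le M}c_nx^n$. For every prime $p$ outside the finite set of primes dividing some denominator of $c_0,\dots,c_M$ or the numerator or denominator of $u(0)$, we have $g\in\Z_{(p)}\lb x\rb$; moreover $P_y(x,g)\equiv P_y(x,f)\bmod x^{M+1}$ and $u(0)\in\Z_{(p)}^{\times}$, so $P_y(x,g)$ generates the ideal $(x^{v})$ of $\Z_{(p)}\lb x\rb$, while $P(x,g)\equiv P(x,f)=0\bmod x^{M+1}$, so $\ord_x P(x,g)>2\ord_x P_y(x,g)$. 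The strong (Newton–Rychlik) form of Hensel's lemma, applied in the $x$-adically complete domain $\Z_{(p)}\lb x\rb$, then produces a root $\bar f\in\Z_{(p)}\lb x\rb$ of $P$ with $\bar f\equiv g\bmod x^{v+1}$; the same uniqueness statement holds in $\Q_p\lb x\rb$, where $f$ (embedded via $\Q\hookrightarrow\Q_p$) is another such root, so $f=\bar f\in\Z_{(p)}\lb x\rb$.

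The step I expect to be the real obstacle is $(\mathrm a)$ in the \emph{ramified} situation $v>0$, i.e. when $f(0)$ is a multiple root of $P(0,y)$ — this genuinely occurs (for instance $f=1+x^{2}\sqrt{1+x}$, of minimal polynomial $y^{2}-2y+1-x^{4}-x^{5}$) and forces the use of the refined form of Hensel's lemma rather than the elementary one; $(\mathrm b)$ is routine apart from the need to extract the sharp $O_p(n)$ (rather than $O_p(n\log n)$) bound on $\sum_{k\le n}v_p(Q(k))$.
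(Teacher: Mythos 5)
Your proof is correct, but it takes a genuinely different route from the paper's. The paper deduces Theorem \ref{thm:Eis} from the uniform statement of Theorem \ref{thm:eisenstein_general} (via Corollary \ref{cor:eisenstein_several}): after translating away a polynomial initial part of $f$ and rescaling by the initial coefficient of $\frac{\partial P}{\partial y}(x,0)$, the coefficients of $f/a$ are computed by an explicit recursion with coefficients in $\frac{1}{a^2}A$, and a weighted-homogeneity count shows that the exponent of $a$ in the denominators grows at most linearly; that argument is uniform in the prime, works over any integral domain, and extends to Hahn series. You instead split the statement prime by prime --- exactly the equivalence recorded in the paper's subsection on effective bounds: (a) integrality at all but finitely many $p$, obtained from the refined Hensel lemma in the $x$-adically complete ring $\Z_{(p)}\lb x\rb$ applied to a truncation $g$ with $\ord_x P(x,g)>2\ord_x P_y(x,g)$; and (b) a linear lower bound on $v_p(c_n)$ at each remaining $p$, from the $P$-recursive recurrence attached to a $D$-finite series --- which is precisely the per-prime refinement (Clark's theorem) that the paper mentions in the remark following Theorem \ref{thm:D-eisenstein} as a strengthening of the $O(\ell\ln\ell)$ bound proved there. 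Both halves are sound: in (a) the strong form of Hensel's lemma correctly handles the ramified case $\ord_x P_y(x,f)>0$ and its uniqueness clause does identify the Hensel root with $f$; in (b) you rightly note that the naive $O(n\log n)$ bound on $\sum_{k\le n}v_p(Q(k))$ is insufficient, and the $O_p(n)$ bound does hold because the roots of $Q$ are algebraic numbers, so $v_p(k-\alpha)=O(\log k)$ and the sum is $n/(p-1)+O(\log^2 n)$ --- a detail worth writing out, since it is the one place where ``algebraic'' (rather than merely ``$p$-adic'') roots are used. What your approach buys is a clean conceptual separation into the two local conditions and a proof from standard tools; what it costs is generality: it is tied to $\Z\subset\Q$ (one needs ``all but finitely many primes'' to make sense, so at best a Dedekind base) and to characteristic zero (the $D$-finiteness step uses differentiation by the separable minimal polynomial), whereas the paper's recursion argument yields Theorem \ref{thm:eisenstein_general} in one stroke.
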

We recall that an \emph{algebraic power series} is a formal power series $f$ for which there exists a nonzero polynomial $P(x,y)\in\Q[x,y]$ such that $P(x,f)=0$. If such a series is written as 
$f=\sum_\ell f_\ell x^\ell$, the conclusion of the  theorem can be reformulated as 
$$\forall \ell\in\N,\ a^{\ell+1}f_\ell\in\Z.$$
This result is an arithmetic tool that can be used to prove that some power series are transcendental. The study of algebraicity of functions defined by power series is an old problem that goes back, at least, to the work of Newton. This is related to the study of the transcendence of values of such functions; the algebraicity of a power series is also an indication that its coefficients satisfy strong relations and may be not to difficult to compute.

Eisenstein, like Galois and Abel, had a short life. He had a difficult childhood marked by illness: he was the only one of his siblings to survive meningitis. Though he survived it, he always had fragile health, being frequently ill, which led him to become hypochondriac. As a result, he always worked with a sense of urgency, with the idea that his time was limited, as explained by A. Weil  \cite[p. 4]{Wei}: \\
"As any reader of Eisenstein must realize, he felt hard pressed for time during the whole of his short mathematical career. As a young man he complains of nervous ailments which often compel him to interrupt his work; later, he developed tuberculosis, and died of it in 1852 at the age of 29. His papers, although brilliantly conceived, must have been written by fits and starts, with the details worked out only as the occasion arose; sometimes a development is cut short, only to be taken up again at a later stage. Occasionally Crelle let him send part of a paper to the press before the whole was finished. One is frequently reminded of Galois' tragic remark "Je n'ai pas Ie temps"."\\
This explains that the  aforementioned note is very short and contains no proof. Eisenstein starts by remarking that, when we expand the root square of $1+x$, the coefficient of $x^\ell$ is a rational number whose denominator is $2^k$ where $k\leq 2\ell$. Then he explains that this is general fact: he states essentially the content of Theorem \ref{thm:Eis}. His style is very particular as he does not use any mathematical symbol for this, but takes one page to explain this statement. Then he ends the note by  considering the  logarithm function: this one is transcendental since the sequence of denominators of the coefficients of its Taylor expansion is the sequence of natural numbers. He also adds that the same remark applies to $e^x$:
\begin{figure}[h]
  \centering
  \includegraphics[scale=0.715]{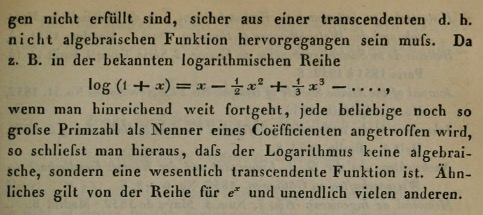}
\end{figure}\\

Eisenstein died of tuberculosis three months after the publication of this note,  October the 11th.
This is Eduard Heine who wrote a proof one month later, in November \cite{Hei}. His paper starts by reproducing the integrality of the note of Eisenstein. Then he explains that once this property of algebraic series has been discovered, the proof is not difficult to give, thus giving credit to Eisenstein for this result, which he calls Eisenstein's theorem ("der Eisensteinsche Satz"). 

In this paper we provide a general version of Eisenstein's Theorem in several indeterminates over a domain of any characteristic. Then we review some questions related to Eisenstein's Theorem. In particular we show how to apply this theorem to answer some questions about the algebraic closure of the field of power series, following \cite{Ron1}. In particular we show how to partially answer a question of Bernard Teissier about Jung-Abhyankar Theorem.

I thank Tanguy Rivoal and Julien Roques for indicating me the references \cite{Cla} and \cite{Saf}.

\section{Eisenstein's Theorem for generalized power series}
Now we present we prove a general version of Eisenstein's Theorem for Hahn power series with no accumulation points in their supports. First of all we start by introducing some terminology:

\begin{definition}
Let   $A$ be a commutative ring and $x$ be a single indeterminate. The set of polynomials $A[x^{\R_+}]$ is the set of finite sums $\sum_{s\in E} a_s x^s$, where $E\subset \R_+$ is finite.\\
We denote by $A\lb x^{\R_+}\rb$ the set of formal power series with exponents in $\R_+$, that is, the set of formal power series $\sum_{s\in \R_+}a_s x^s$ such that
\begin{equation}\label{cond:ps}\forall r\in \R,\ \ \#\{s\in \R_+\mid a_s\neq 0 \text{ and } s\leq r\}\leq\infty\end{equation}
For $f\in A\lb x^{\R_+}\rb$,  $f=\sum_{s\in {\R_+}}a_s x^s$, we define its \emph{support}:
$$\Supp(f):=\{s\in \R_+\mid a_s\neq 0\}.$$
The set $A\lb x^{\R_+}\rb$ is a subset of the ring of Hahn series, that is, power series  whose support is well-ordered (see \cite{Rib} for example).

We easily check that the usual addition and multiplication of polynomials and power series are well defined on $A[x^{\R_+}]$ and $A\lb x^{\R_+}\rb$, and make them commutative rings. The only difficulty is to check that the product of two power series is well defined, and this is where condition \eqref{cond:ps} is used.

For $f\in A\lb x^{\R_+}\rb$, $f\neq 0$, we define $\ord(f)$ to be the smallest $s\in \Supp(f)$. We define the \emph{initial monomial of $f$}:
$$\ini(f)=a_{\ord(f)} x^{\ord(f)}.$$
\end{definition}

Now we can state the following generalization of Theorem \ref{thm:Eis} that seems (along with its Corollary \ref{cor:eisenstein_several}) to cover all the different known generalizations of Eisenstein's Theorem:
\begin{theorem}[Eisenstein's Theorem for generalized series]\label{thm:eisenstein_general}
Let $A$ be an integral domain, $\K$ be its fraction field, and $x$ be one single indeterminate. Let $f(x)\in \K\lb x^{\R_+}\rb$ be algebraic and separable over $A\lb x^{\R_+}\rb$ that is written
$\displaystyle f=\sum_{s\in \R_+}a_sx^s.$
 Then there is $a\in A$ such that
$$  \forall s\in \R_+,\ \ \ a^{\lceil s\rceil+1}a_s\in A.$$
\end{theorem}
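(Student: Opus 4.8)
The plan is to mimic the classical proof of Eisenstein's Theorem (as in Heine), but carefully adapted to the generalized setting where exponents lie in $\R_+$ rather than $\N$. Let $P(x,y) = \sum_{i=0}^d b_i(x) y^i \in A\lb x^{\R_+}\rb[y]$ be a polynomial witnessing the algebraicity of $f$, with $P(x,f)=0$ and $\partial P/\partial y\, (x,f) \neq 0$ by separability. After multiplying $P$ by a suitable element of $A$ and clearing denominators we may assume all $b_i \in A\lb x^{\R_+}\rb$; after a further change $f \mapsto c f$ for $c \in A$ (which only affects the constant $a$ we produce) we may assume $P$ is monic in $y$, i.e.\ $b_d = 1$. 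The strategy is then: (1) reduce to showing the coefficients $a_s$ have denominators controlled by powers of a single $a \in A$; (2) set up a recursion, grading by the exponent $s$, that expresses each $a_s$ in terms of finitely many earlier coefficients $a_{s'}$ with $s' < s$ together with coefficients of the $b_i$; (3) track how the powers of $a$ accumulate along this recursion, and show the accumulation rate is at most one new power of $a$ per unit increase of the exponent, which yields the bound $a^{\lceil s\rceil + 1} a_s \in A$.

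For step (2), the key observation is that because every series in $A\lb x^{\R_+}\rb$ satisfies the local finiteness condition \eqref{cond:ps}, the support of $f$ is a well-ordered subset of $\R_+$ with no accumulation points; enumerate it as $s_0 < s_1 < s_2 < \cdots$. I would argue by transfinite (well-founded) induction on this support. Write $u(x) = \partial P/\partial y\,(x,f)$; its initial monomial is $\alpha x^\rho$ for some $\alpha \in \K^\times$ and $\rho \in \R_+$, and by separability $\alpha \neq 0$. Expanding $P(x, f + \text{(tail)}) = 0$ around a truncation $f_{<s}$ of $f$ below exponent $s$, the coefficient of $x^{s+\rho}$ in the identity $P(x,f)=0$ has the shape $\alpha\, a_s + (\text{polynomial expression in } a_{s'} \text{ for } s' < s, \text{ and in coefficients of the } b_i)$. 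Solving for $a_s$ introduces a division by $\alpha$. Choosing $a \in A$ so that $a$ kills the denominator of $\alpha^{-1}$ and also all denominators of coefficients of the (finitely many truncated parts of the) $b_i$ needed, one gets a recursion $a_s = \alpha^{-1}(\cdots)$ in which denominators are powers of $a$.

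For step (3) — which I expect to be the main obstacle — one must bound the exponent of $a$ in $a_s$ uniformly, getting $\lceil s \rceil + 1$. The subtlety compared to the $\N$-graded case is that when exponents are real there is no ``predecessor,'' so one cannot naively say ``the denominator grows by one power of $a$ from $a_{s-1}$ to $a_s$.'' Instead I would prove by induction the sharper statement: there exist $a \in A$ and $\rho \in \R_{>0}$ (from $\ini(u)$) such that for every $s$ in the support, $a^{\lceil s \rceil + 1} a_s \in A$, using that every monomial $x^{s'}$ appearing in the recursive expression for $a_s$ has $s' < s$, and that the \emph{number} of such terms and the multiplicities with which earlier $a_{s'}$ appear are controlled (each product of earlier coefficients $a_{s_1'}\cdots a_{s_k'}$ contributing to $a_s$ has $s_1' + \cdots + s_k' + (\text{something}\ge 0) = s$, so $\sum \lceil s_i'\rceil \le \lceil s\rceil$ up to a bounded additive error absorbed by enlarging $a$). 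Combining $\sum_i (\lceil s_i' \rceil + 1) \le \lceil s \rceil + k$ with one extra power of $a$ from the division by $\alpha$, and using a ``potential function'' argument to show the ``$+1$'' per exponent-unit is never exceeded, closes the induction. A clean way to organize this is to fix $a$ so that $a \cdot \ini(u)^{-1} \in A\lb x^{\R_+}\rb$ with $A$-coefficients and $a^{\lceil s\rceil+1}$ handles all finitely many relevant denominators simultaneously, then verify the inductive inequality on exponents of $a$ directly; I would present the combinatorial bookkeeping of these exponents as the heart of the argument, since the algebraic recursion itself is routine once the grading is set up.
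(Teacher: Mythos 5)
Your overall architecture matches the paper's: pass to the recursion indexed by the well-ordered support $s_0<s_1<\cdots$ of $f$, normalize using the initial monomial of $\frac{\partial P}{\partial y}$, and then do inductive bookkeeping of the powers of $a$ in the denominators. But the step you yourself flag as ``the main obstacle'' --- step (3) --- is exactly where the proof lives, and the sketch you give for it does not close. The problem is that the potential function $F(s)=\lceil s\rceil+1$ is not superadditive: a monomial $c_{s_{k_0}}b_{s_{k_1}}\cdots b_{s_{k_r}}$ with $s_{k_0}+s_{k_1}+\cdots+s_{k_r}=s_n$ contributes, under your inductive hypothesis, a denominator exponent of at least $\sum_i(\lceil s_{k_i}\rceil+1)$, which exceeds $\lceil s_n\rceil+1$ by up to $2r-1$ \emph{at every step of the recursion}. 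Enlarging $a$ (replacing $a$ by $a^N$) rescales the admissible exponent multiplicatively and cannot absorb an additive surplus that recurs at each step: since the gaps $s_{n+1}-s_n$ can be arbitrarily small (e.g.\ $s_n=\sqrt n$ satisfies condition \eqref{cond:ps}), the accumulated surplus after $n$ steps can grow faster than any linear function of $s_n$, and the induction fails.

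The paper's resolution has two ingredients you are missing. First, a preliminary reduction: subtract from $f$ its (finite, by \eqref{cond:ps}) low-order part with exponents $\le e+1$, where $e=\ord\bigl(\frac{\partial P}{\partial y}(x,f)\bigr)$, and recenter $P$ accordingly. After normalizing by $\ini\bigl(\frac{\partial P}{\partial y}(x,0)\bigr)=ax^e$ one gets an equation $\ovl P(x,z)=\frac{P(x,0)}{a^2x^e}+\frac{1}{ax^e}\frac{\partial P}{\partial y}(x,0)\,z+z^2\ovl R(x,z)$ in which the quadratic-and-higher coefficients lie in $A$ (no denominators at all), the linear coefficient is $1$ plus terms of order $\ge s_{\min}>0$ carrying denominator only $a$, and the constant term has order $\ge 2s_{\min}$ and carries denominator only $a^2$. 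Second, the correct potential is the \emph{linear homogeneous} one $F(s)=\la s$ with $\la=1/s_{\min}$: it is additive, so products of earlier coefficients cost exactly $\la\sum_i s_{k_i}$, and the cost $m$ of the coefficient $c_{s_{k_0}}$ (namely $2$, $1$, or $0$) is dominated by $\la s_{k_0}$ precisely because of the order bounds above. This makes the induction $\ell(n)\le\la s_n$ close with no additive slack, and only at the very end does one replace $a$ by a power of itself to convert $\la s$ into $\lceil s\rceil+1$. Separately, your preliminary claim that one may assume $P$ monic in $y$ via $f\mapsto cf$ with $c\in A$ is incorrect as stated (the leading coefficient of $P$ is a power series, not an element of $A$); the paper simply does not need monicity.
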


\begin{proof}
Let $P(x,y)\in A\lb x^{\R_+}\rb[y]$ be an irreducible polynomial with $P(x,f(x))=0$. Since  $f$ is separable over $A\lb x^{\R_+}\rb$, $\frac{\partial P}{\partial y}(x,f(x))\neq 0$.
 We set $e:=\ord\left(\frac{\partial P}{\partial y}(x,f(x))\right)$. We write
 $$f(x)=\sum_{\a\in\R^+}a_s x^s,\ f^{(0)}(x)=\sum_{s\leq e+1}a_s x^\a\text{ and } f^{(1)}(x)=f(x)-f^{(0)}(x).$$
By \ref{cond:ps}, $f^{(0)}(x)$ is a finite sum of monomials, hence  there exists $a\in A$ such that $af^{(0)}(x)\in A[x^{\R_+}]$. If $d=\deg_y(P(x,y))$, then $\ovl P(x,y):=a^dP(x,\frac{1}{a}y)\in A\lb x^{\R_+}\rb[y]$ and $\ovl P(x,af(x))=0$. So we can replace $P$ by $\ovl P$, and $f(x)$ by $af(x)$. Thus we way assume that $f^{(0)}(x)\in A[x^{\R_+}]$.

We have $P(x,f^{(0)}(x)+y)\in A\lb x^{\R_+}\rb[y]$ and $P(x,f^{(0)}(x)+f^{(1)}(x))=0$. The proposition is true for $f(x)$ if and only if it is true for $f^{(1)}(x)$. So we may replace $P(x,y)$ by $P(x,f^{(0)}(x)+y)$ and assume that $f^{(0)}(x)=0$. In particular   $\ord(f(x))>e+1$. Moreover, $\ord\left(\frac{\partial P}{\partial y}(x,f(x))\right)=e$ does not change under this modification.

We have
\begin{equation}\label{taylor}0=P(x,f(x))=P(x,0)+\frac{\partial P}{\partial y}(x,0)f(x)+f(x)^2R(x)\end{equation}
for some $R(x)\in A\lb x^{\R_+}\rb$. Because $\ord(f(x))>e$ and $\ord\left(\frac{\partial P}{\partial y}(x,f(x))\right)=e$, we have $\ord\left(\frac{\partial P}{\partial y}(x,0)\right)=e$. So $\ord(f(x)^2)>e=\ord\left(\frac{\partial P}{\partial y}(x,0)f(x)\right)$, and 
\begin{equation}\label{taylor:ini}\ini\left(\frac{\partial P}{\partial y}(x,0)f(x)\right)=-\ini(P(x,0)).\end{equation}
 In particular $\ord(P(x,0))> 2e$. We set $ax^e=\ini\left(\frac{\partial P}{\partial y}(x,0)\right)$.  We replace the indeterminate $y$ by $az$ where $z$ is a new indeterminate. We set $\ovl P(x,z)=\frac{1}{a^2x^e}P(x,az)$. We have
 \begin{equation}\label{eq:taylor2} \ovl P(x,z)=\frac{1}{a^2x^e}P(x,az)=\frac{P(x,0)}{a^2x^e}+\frac{1}{a x^e}\frac{\partial P}{\partial y}(x,0)z+z^2\ovl R(x,z)\end{equation}
 for some $\ovl R(x,z)\in A\lb x^{\R_+}\rb[z]$. We have $\ovl P(x,g(x))=0$ where $g(x)=\fract{f(x)}{a}$.
 
 Let us write $g(x)=\sum_{n\in\N}b_{s_n}x^{s_n}$ where $(s_n)_n$ is  increasing and $b_{s_n}\neq 0$ for every $n$. Since $\ovl P(x,g(x))=0$ and $\ini\left(\frac{1}{ax^e}\frac{\partial P}{\partial y}(x,0))\right)=1$, the coefficient of $x^{{s_n}}$ in   $\ovl P(x,g(x))$ is zero, and this one can be written as
 \begin{equation}\label{recursion}b_{s_n}-Q_n(b_{s_k})=0\end{equation}
 where $Q_n$ is a polynomial depending on some $b_{s_k}$ for $k<n$ with coefficients in $\frac{1}{a^2}A$.  Therefore, by induction on $n$, we have that $b_{s_n}=\frac{c_n}{a^{\ell(n)}}$ for some $c_n\in A$ and $\ell(n)\in\N$.
 
 Moreover $Q_n$ is a sum of monomials of the form $c_{s_{k_0}} b_{s_{k_1}}\cdots b_{s_{k_r}}$ with $r\leq d$ and $c_{s_{k_0}}$ is the coefficient of $x^{s_{k_0}}$ in the expansion of one of the coefficients of $\ovl P(x,z)$, and $s_{k_0}+s_{k_1}+\cdots+s_{k_r}=s_{n}$. 
 The coefficients of $\ovl R(x,z)$ are in $A$. We set
 $$s_{\min}:=\min\left\{\frac12\ord\left(\frac{P(x,0)}{a^2x^e}\right),\ord\left(\frac{1}{a x^e}\frac{\partial P}{\partial y}(x,0)-1\right)\right\}.$$
 We set $\la:=\frac{1}{ s_{\min}}$.
 Then for every monomial $cx^s$ in the expansion of $\frac{P(x,0)}{a^2x^e}$ or $\frac{1}{a x^e}\frac{\partial P}{\partial y}(x,0)-1$, we claim that $c=\frac{c'}{a^{m(s)}}$ with $m(s)\leq \la s$. Indeed, if $cx^s$ is a monomial of $\frac{P(x,0)}{a^2x^e}$, then we can choose $m(s)=2$; but $2=\frac{1}{s_{\min}}2s_{\min}\leq \la s_k$. The other case is similar.
 
 We prove by induction on $n$ that $\ell(n)\leq \la s_n$. Indeed, by induction, every monomial of the form  $c_{s_{k_0}} b_{s_{k_1}}\cdots b_{s_{k_r}}$ as before can be written as $\frac{c''}{a^{\la s_{k_0}+\ell({k_1})+\cdots+\ell({k_r})}}$ with $c''\in A$. Thus 
$$ \la {k_0}+\ell({k_1})+\cdots+\ell({k_r})\leq \la (s_{k_0}+s_{k_1}+\cdots+s_{k_r})=\la s_n.$$
So $\ell(n)\leq \la s_n$.
This proves the result.
 \end{proof}
 
 We can deduce the following version of Eisenstein's Theorem for power series in several indeterminates (this statement has first been proved over $\C$ in \cite{Saf}, and in positive characteristic when $n=1$ in \cite{BBC}):
 \begin{corollary}[Eisenstein's Theorem in several indeterminates]\label{cor:eisenstein_several}
Let $A$ be an integral domain, $\K$ be its fraction field, and $x=(x_1,\ldots,x_n)$ be a $n$-uple of indeterminates. Let $f(x)\in \K\lb x\rb$ be algebraic  over $A\lb x\rb$. Then there is $a\in A$ such that
$$af(ax_1,\ldots, ax_n)\in A\lb x\rb.$$
 \end{corollary}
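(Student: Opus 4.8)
The plan is to derive the statement from Theorem~\ref{thm:eisenstein_general} by collapsing the $n$ indeterminates onto a single indeterminate carrying real exponents. First I would pick positive reals $\la_1,\dots,\la_n$, all $\ge 1$, that are linearly independent over $\Q$ (such a tuple exists: the bad locus is a countable union of hyperplanes in $[1,\infty)^n$, or one may simply take $\la_i=1+\sqrt{p_i}$ for distinct primes $p_i$), and substitute $x_i\mapsto t^{\la_i}$: for $h=\sum_{\b\in\N^n}h_\b x^\b$ put $\phi(h):=\sum_\b h_\b\,t^{\lg\la,\b\rg}$, where $\lg\la,\b\rg:=\la_1\b_1+\cdots+\la_n\b_n$. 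Because $\la_i\ge 1$ we have $\lg\la,\b\rg\ge|\b|$, so for every $r\in\R$ the set $\{\b:\lg\la,\b\rg\le r\}$ is contained in the finite set $\{\b:|\b|\le r\}$; thus $\phi(h)$ satisfies \eqref{cond:ps}, and $\phi$ is a well-defined ring homomorphism $\K\lb x\rb\lgw\K\lb t^{\R_+}\rb$ mapping $A\lb x\rb$ into $A\lb t^{\R_+}\rb$. Since $\la_1,\dots,\la_n$ are $\Q$-linearly independent, $\b\mapsto\lg\la,\b\rg$ is injective on $\N^n$, so $\phi$ is injective and, writing $g:=\phi(f)=f(t^{\la_1},\dots,t^{\la_n})=\sum_\b f_\b\,t^{\lg\la,\b\rg}$, distinct multi-indices contribute distinct exponents; in particular the coefficient of $t^{\lg\la,\b\rg}$ in $g$ is exactly $f_\b$. (For $n\ge 2$ this is where the real-exponent framework of Theorem~\ref{thm:eisenstein_general} is essential: no substitution into integer powers of $t$ can separate all coefficients of $f$ simultaneously.)

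Next, assuming for the moment that $f$ is separable over $A\lb x\rb$ — automatic if $\ch(\K)=0$ — I would check that $g$ is algebraic and separable over $A\lb t^{\R_+}\rb$: if $P(x,y)=\sum_j p_j(x)y^j\in A\lb x\rb[y]$ is nonzero with $P(x,f)=0$, then $\phi(P):=\sum_j\phi(p_j)y^j\in A\lb t^{\R_+}\rb[y]$ is nonzero (by injectivity of $\phi$) and vanishes at $g$, and, $\phi$ being injective, $g$ is moreover separable over $A\lb t^{\R_+}\rb$ (its minimal polynomial divides the image of that of $f$). Theorem~\ref{thm:eisenstein_general} then supplies $a_0\in A$ with $a_0^{\lceil s\rceil+1}c_s\in A$ for every $s\in\R_+$, where $c_s$ is the coefficient of $t^s$ in $g$; specialising to $s=\lg\la,\b\rg$ gives $a_0^{\lceil\lg\la,\b\rg\rceil+1}f_\b\in A$ for all $\b$. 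Since $\lg\la,\b\rg\le M|\b|$ with $M:=\max_i\la_i$, we have $\lceil\lg\la,\b\rg\rceil+1\le(M+2)|\b|$ for $\b\ne 0$ (and the exponent is $1$ for $\b=0$); so, choosing an integer $C\ge M+2$ and putting $a:=a_0^{\,C}$, the integer $C(|\b|+1)-(\lceil\lg\la,\b\rg\rceil+1)$ is nonnegative and
\[
a^{|\b|+1}f_\b=a_0^{\,C(|\b|+1)-(\lceil\lg\la,\b\rg\rceil+1)}\bigl(a_0^{\,\lceil\lg\la,\b\rg\rceil+1}f_\b\bigr)\in A
\]
for every $\b$ (for $\b=0$ one uses $af_0=a_0^{\,C-1}(a_0f_0)\in A$). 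Hence $af(ax_1,\dots,ax_n)=\sum_\b a^{|\b|+1}f_\b\,x^\b\in A\lb x\rb$.

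Finally I would remove the separability hypothesis when $\ch(\K)=p>0$. There the minimal polynomial of $f$ over $\Frac(A\lb x\rb)$ has the shape $\nu(y^{q})$ with $q=p^k$ and $\nu$ irreducible separable, so $f^{q}=\sum_\b f_\b^{q}x^{q\b}$ is separable and algebraic over $A\lb x\rb$; applying the case already proved to $f^{q}$ yields $b\in A$ with $bf^{q}(bx_1,\dots,bx_n)\in A\lb x\rb$, i.e. $b^{q|\b|+1}f_\b^{q}\in A$, equivalently $b\,(b^{|\b|}f_\b)^{q}\in A$, for all $\b$. Thus $b^{|\b|}f_\b$ is integral over $A[1/b]$; when $A$ is integrally closed — for instance a Krull domain, which covers the classical cases $A=\Z$ and $A=\mathbb{F}_q[t]$ — looking at the height-one valuations $v$ of $A$ turns $b^{q|\b|+1}f_\b^{q}\in A$ into $v(f_\b)\ge-(|\b|+\tfrac1q)v(b)\ge-(|\b|+1)v(b)$ (using $v(b)\ge 0$), so $b^{|\b|+1}f_\b\in A$ and $a=b$ works; a general domain is handled by first passing to its normalization and then clearing a common denominator. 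I expect this final step — extracting $q$-th roots of the denominator bound in characteristic $p$ — to be the only genuinely delicate point; granted Theorem~\ref{thm:eisenstein_general}, all the rest is the formal bookkeeping of the substitution $\phi$ and the sandwich $|\b|\le\lg\la,\b\rg\le M|\b|$.
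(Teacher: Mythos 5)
Your main argument --- the substitution $x_i\mapsto t^{\la_i}$ with $\Q$-linearly independent weights in $[1,M]$, the injectivity of $\phi$, the application of Theorem \ref{thm:eisenstein_general}, and the sandwich $|\b|\le\lg\la,\b\rg\le M|\b|$ followed by replacing $a_0$ by a fixed power of itself --- is exactly the paper's proof (the paper takes $1\le\o_i\le 2$ and $a=b^2$); that part is correct, and your verification that $\phi$ lands in series satisfying \eqref{cond:ps} is a detail the paper leaves implicit. The only real divergence is how separability is secured before invoking Theorem \ref{thm:eisenstein_general}. The paper makes no case distinction on the characteristic: it uses the fact that the ring of algebraic power series $\K\langle x\rangle$ is the henselization of $\K[x]_{(x)}$, i.e.\ a filtered union of \'etale extensions, so any $f\in\K\lb x\rb$ algebraic over $\K[x]$ is automatically \emph{separable} over $\K[x]$ in every characteristic (see \cite{Nag}); pushing this through $\phi$ gives separability over $A\lb t^{\R_+}\rb$ at once, and your entire third paragraph becomes unnecessary. (You have implicitly put your finger on a real subtlety, though: the hypothesis as printed is ``algebraic over $A\lb x\rb$'', which is weaker than ``algebraic over $\K[x]$'', and the separability-for-free argument needs the latter; the paper's own proof silently makes that substitution of hypotheses.)

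This matters because the third paragraph is also where your proof has a genuine gap. The passage to $f^q$ and the extraction of $q$-th roots of the denominator bound via height-one valuations is fine when $A$ is a Krull domain, as you say; but the closing sentence --- ``pass to the normalization and clear a common denominator'' --- does not handle an arbitrary integral domain. The integral closure $\bar A$ of $A$ in $\K$ need not be a Krull domain, and need not be contained in $\frac1c A$ for any single $c\in A\setminus\{0\}$: the conductor can be zero, and the normalization fails to be a finite $A$-module already for some Noetherian domains. So, as written, your argument proves the corollary for all $A$ in characteristic $0$ but only for a restricted class of $A$ in characteristic $p$. Replacing the characteristic-$p$ detour by the henselization/separability observation closes the gap and recovers the statement in full generality.
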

 
 \begin{proof}
 Let $\o_1$, \ldots, $\o_n>0$ be linearly independent over $\Q$, and assume that $1\leq |\o_i|\leq 2$ for every $i$. Let $t$ be a new single indeterminate. We define a morphism
 $$\phi:\K\lb x_1,\ldots, x_n\rb\lgw \K\lb t^{\R_+}\rb$$
by $\phi(h(x))=h(t^{\o_1},\ldots, t^{\o_n})$. The images under $\phi$ of two different monomials are two different powers of $t$, since the $\o_i$ are $\Q$-linearly independent. Therefore $\phi$ is injective. Moreover $\phi(h)\in A\lb t^{\R_+}\rb$ if and only if $h\in A\lb x\rb$.\\
The ring of algebraic power series $\K\langle x\rangle$ is the henselization of the local ring $\K[x]_{(x)}$, thus $f\in\K\lb x\rb$, which is algebraic over $\K[x]$, is separable over $\K[x]$ (see \cite[p. 180]{Nag} for example). Therefore, $\phi(f)$ is algebraic and separable over $\K[t^{\R_+}]$. Let $f=\sum_{\a\in \N^n} a_\a x^\a$. Then $\phi(f)=\sum_{\a\in\N^n} a_\a t^{\langle \a,\o\rangle}$. 
By Theorem \ref{thm:eisenstein_general}, there is $b\in A$ such that $b^{\lceil \langle\a,\o\rangle\rceil}a_\a\in A$ for every $\a$. But $\langle \a,\o\rangle\leq 2|\a|$, so $b^{2|\a|}\in A$, hence, for   $a=b^2$, we have 
$$af(ax_1,\ldots, ax_n)\in A\lb x\rb.$$
 \end{proof}
 
 \begin{remark}
 We can relax the hypothesis of the previous corollary as follows: we keep the notations  of Corollary \ref{cor:eisenstein_several} and we assume now that $f(x)\in\L\lb x\rb$ where $\K\lgw \L$ is a field extension and that $f(x)$ is algebraic over $A[x]$. 
 Assume that $\L$ is separable over $\K$, or that $\K$ is a finitely generated extension field of a perfect field. Then the coefficients of $f(x)$ belong to a finite field extension $\K'$ of $\K$ (see \cite{CK}). Thus  we may assume that $\K'=\K(\alpha_1,\ldots, \alpha_r)$ for some $\alpha_i$ algebraic over $\K$. We set $A'=A[\alpha_1,\ldots, \alpha_r]$. Then, by the previous result, there is a $a'\in A'$ such that $a'f(a'x_1,\ldots, a'x_n)\in A'\lb x\rb$. Since $a'$ is algebraic over $A$, there is a nonzero $a''\in A'$ such that $a=a'a''\in A$. Moreover $af(ax_1,\ldots, ax_n)\in A'\lb x\rb$.
 \end{remark}

\section{Extensions, variations and applications}
\subsection{Effective bounds}
Assume here that $f(x)\in\ovl \Q\lb x\rb$ is algebraic over $\ovl \Q[x]$, where $x$ is a single indeterminate. Let us write $f(x)=\sum_{\ell} f_\ell x^\ell$. Let $\K$ be a number field containing the $a_\ell$. Let $N$ denotes the set of valuations of $\K$ extending the $p$-adic valuations of $\Q$. Then
Eisenstein's Theorem for $f(x)$ is equivalent to:
\begin{enumerate}
\item for all but finitely many  $\nu\in N$, $\nu(f_\ell)\geq 0$.
\item for every $\nu\in N$, there is $\lambda_\nu\in \R_+$ such that  $\nu(f_\ell)\leq \lambda_\nu(\ell+1)$.
\end{enumerate}
In this situation, a lot of work has been done to find sharp estimates of the lowest $\lambda_k$ satisfying the previous condition. We do not want to come up to this topic in details here, but the reader may consult \cite{Schm, DR, DcdP, BB} for further results in this direction.

\subsection{Analogue of Eisenstein's Theorem for D-finite power series}

\begin{definition}
Let $x$ be a single indeterminate.
A formal power series $f\in \Q\lb x\rb$  is called \emph{D-finite} if it satisfies a non trivial relation of the following form:
$$a_d(x)f^{(d)}(x)+\cdots+a_{1}(x)f'(x)+a_0(x)f(x)=0$$
where the $a_i$ are polynomials.
\end{definition}

\begin{example}
This definition is equivalent to saying that the $\Q(x)$-vector space generated by the derivatives of $f$ is finitely generated.\\
If $f$ is an algebraic power series, then $f$ satisfies a non trivial relation 
$$a_0(x)f^d(x)+\cdots+a_d(x)=0$$ where the $a_i(x)\in\Q(x)$ and $d$ is assumed to be minimal. By differentiating this equality, because $d$ is minimal and $\Q$ has characteristic zero, we obtain that $f'$ belongs to $\K(x,f(x))$ which is a finite extension of $\Q(x)$. By induction, all the derivatives of $f$ belong to $\Q(x,f(x))$. Therefore the $\Q(x)$-vector space generated by the derivatives of $f$ is a subspace of $\Q(x,f(x))$ that is finitely dimensional since $f$ is algebraic over $\Q(x)$. Therefore $f$ is $D$-finite.
\end{example}

\begin{remark}
In general, one defines $D$-finite power series with coefficients in a number field $\k$ to be a power series $f\in\k\lb x\rb$ such that the $\k(x)$-vector space generated by the derivatives of $f$ is finitely generated. But, for $f\in\k\lb x\rb$, if we write $f=\o_1f_1+\cdots+\o_sf_s$ where $(\o_1,\ldots, \o_s)$ is a $\Q$-basis of $\k$, then $f$ is $D$-finite if and only if the $f_i$ are $D$-finite (see \cite[Proposition 2.1]{DGS} for example). Therefore the study of $D$-finite power series with coefficients in a number field often reduces to the study of $D$-finite power series with rational coefficients.
\end{remark}

\begin{example}
The series
$$\exp(x)=\sum_{\ell\in\N} \frac{1}{\ell!}x^\ell,\ \ \ln(x)=\sum_{\ell\in\N}\frac{(-1)^{k+1}}{\ell}x^\ell$$
are $D$-finite. They are  not algebraic according to Eisenstein's Theorem.
\end{example}

The following result is an analogue of Eisenstein's Theorem for $D$-finite power series. It provides a bound on the number of primes dividing the denominators of the coefficients of a $D$-finite power series.

\begin{theorem}\label{thm:D-eisenstein}
Let $x$ be a single indeterminate.
Let $f(x)\in \Q\lb x\rb$ be a $D$-finite power series. Let us write $f(x)=\sum_{\ell\in\N} \frac{a_\ell}{b_\ell} x^\ell$ where the $a_\ell$ and $b_\ell$ are in $\Z$, and $\gcd(a_\ell, b_\ell)=1$. Then 
let us expand
$$\lcm\{b_0, \ldots, b_\ell\}=p_{\ell,1}\cdots p_{\ell, s_\ell}$$
 where the $p_{\ell, i}$ are prime elements. Then there is a contant $K>0$ such that
 $$\forall \ell\geq 2, \ \ s_\ell\leq K\ell\ln(\ell).$$
\end{theorem}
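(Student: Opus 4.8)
The plan is to exploit the fact that a $D$-finite power series is \emph{$P$-recursive}: its coefficients satisfy a linear recurrence with polynomial coefficients. Concretely, starting from a nontrivial relation $a_d(x)f^{(d)}(x)+\cdots+a_0(x)f(x)=0$ with $a_i\in\Q[x]$, and using $f^{(i)}(x)=\sum_n\frac{(n+i)!}{n!}f_{n+i}x^n$, one extracts the coefficient of $x^n$ and clears denominators. This produces polynomials $P_0,\ldots,P_s\in\Z[n]$, not all zero, such that the relation $P_0(n)f_{n+s}+\cdots+P_s(n)f_n=0$ holds for all sufficiently large $n$; since the underlying differential relation is nontrivial, we may choose this recurrence so that $P_0\not\equiv 0$ (take the largest shift with nonzero coefficient as the leading term). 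As $P_0$ has only finitely many integer roots, there is an $N_0\ge s$ with $P_0(n-s)\ne 0$ for all $n\ge N_0$, and then
$$f_n=-\frac{1}{P_0(n-s)}\bigl(P_1(n-s)f_{n-1}+\cdots+P_s(n-s)f_{n-s}\bigr)\qquad(n\ge N_0).$$

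The next step is to track denominators. Writing $f_k=a_k/b_k$ in lowest terms and multiplying the displayed identity by $P_0(n-s)\,b_{n-1}\cdots b_{n-s}$ yields an integer on the right-hand side, so $b_n$ divides $P_0(n-s)\,b_{n-1}\cdots b_{n-s}$ for $n\ge N_0$. Hence every prime divisor of $b_n$ divides $P_0(n-s)$ or one of $b_{n-1},\ldots,b_{n-s}$. Letting $S_\ell$ be the set of primes dividing $\lcm\{b_0,\ldots,b_\ell\}$, so that $s_\ell=\#S_\ell$, we get $S_\ell\subseteq S_{\ell-1}\cup\{p : p\mid P_0(\ell-s)\}$ for $\ell\ge N_0$, and therefore
$$s_\ell\le s_{N_0}+\sum_{k=N_0+1}^{\ell}\omega\bigl(P_0(k-s)\bigr)\qquad(\ell\ge N_0),$$
where $\omega(m)$ denotes the number of distinct prime divisors of a nonzero integer $m$.

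Finally I would estimate the sum crudely. Since the product of any $t$ distinct primes is at least $2^t$, we have $\omega(m)\le\log_2|m|$ for all nonzero $m$; and because $P_0$ is a fixed nonzero polynomial of degree $D$, $|P_0(k-s)|\le Ck^{D}$ for a suitable constant $C$ and all large $k$. Thus $\omega(P_0(k-s))\le D\log_2 k+O(1)$, whence $\sum_{k=N_0+1}^{\ell}\omega(P_0(k-s))\le D\log_2(\ell!)+O(\ell)\le \frac{D}{\ln 2}\,\ell\ln\ell+O(\ell)$. Combining with the previous inequality, absorbing the constant $s_{N_0}$ and the lower-order term, and noting that the finitely many indices $2\le\ell\le N_0$ contribute only a bounded amount, we conclude $s_\ell\le K\ell\ln\ell$ for all $\ell\ge 2$, provided $K$ is taken large enough. (With the sharper estimate $\omega(m)=O(\log|m|/\log\log|m|)$ one even gets $s_\ell=O(\ell\ln\ell/\ln\ln\ell)$, but this is not needed.)

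This argument is essentially routine; the only points requiring care — rather than a genuine obstacle — are the derivation of the coefficient recurrence with $P_0\not\equiv 0$ (this is exactly where $D$-finiteness is used, via the nontrivial ODE it satisfies) and the bookkeeping at the bottom of the range, i.e. choosing $N_0$ beyond all integer roots of $P_0(n-s)$ and handling the finitely many exceptional coefficients $f_0,\ldots,f_{N_0-1}$ separately, whose denominators contribute only an additive constant to $s_\ell$. One could also start directly from a polynomial recurrence by invoking the equivalence ``$D$-finite $\Leftrightarrow$ $P$-recursive'' as a known fact.
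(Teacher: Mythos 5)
Your overall route is exactly the paper's: pass from the differential equation to a $P$-recurrence $P_0(n)f_{n+s}+\cdots+P_s(n)f_n=0$ with $P_0\not\equiv 0$, solve for the top coefficient beyond the finitely many integer zeros of $P_0$, bound the number of prime factors of the polynomial value by $\log_2|P_0(k-s)|=O(\log k)$, and telescope to get $O(\ell\ln\ell)$. There is, however, one genuine discrepancy with the statement being proved. In the theorem, $s_\ell$ is the number of prime factors of $\lcm\{b_0,\ldots,b_\ell\}$ \emph{counted with multiplicity} (this is forced by the sharpness remark following the theorem: the number of \emph{distinct} primes dividing $n!$ is only $\pi(n)\sim n/\ln n$, so the claimed sharpness via $\sum x^n/n!$ only makes sense for the count with multiplicity). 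You instead set $s_\ell=\#S_\ell$ with $S_\ell$ the \emph{set} of primes dividing the lcm, i.e.\ you bound $\omega$ rather than $\Omega$, which is a strictly weaker statement. Moreover, your divisibility $b_n\mid P_0(n-s)\,b_{n-1}\cdots b_{n-s}$ is too weak to repair this: it only gives $v_p(b_n)\le v_p(P_0(n-s))+\sum_{i=1}^s v_p(b_{n-i})$, under which the exponents could a priori grow geometrically in $n$, destroying any $O(\ell\ln\ell)$ bound on $\Omega(\lcm\{b_0,\ldots,b_\ell\})$.

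The fix is small but necessary. When you put the right-hand side of the recurrence over a common denominator, that denominator is $\lcm\{b_{n-1},\ldots,b_{n-s}\}$, not the product; hence
$$b_n\ \big|\ P_0(n-s)\cdot\lcm\{b_{n-1},\ldots,b_{n-s}\},$$
and therefore $\lcm\{b_0,\ldots,b_n\}$ divides $P_0(n-s)\cdot\lcm\{b_0,\ldots,b_{n-1}\}$. Taking $\Omega$ of both sides gives $s_n\le s_{n-1}+\Omega\bigl(P_0(n-s)\bigr)\le s_{n-1}+\log_2|P_0(n-s)|$ \emph{with multiplicity} (the bound $\Omega(m)\le\log_2|m|$ you use is equally valid for the count with multiplicity), which is precisely the recursion the paper sums. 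With this one-line change the rest of your estimate goes through verbatim and yields the stated theorem.
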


\begin{proof}
We set $f_\ell=\frac{a_\ell}{b_\ell}$. It is well known that $f$ is $D$-finite if and only if the sequence $(f_\ell)_\ell$ is $P$-recursive (see \cite[Theorem 1.5]{Sta} for example). This means that there are polynomials $p_0$, $p_1$, \ldots , $p_e\in \Z[t]$ such that
$$\forall \ell\in\N,\ \ p_0(\ell)f_\ell+p_1(\ell)f_{\ell+1}+\cdots+p_e(\ell)f_{\ell+e}=0.$$
Let $N\in\N$ large enough in order to insure that $p_e(\ell)\neq 0$ for $\ell\geq N$. Let $\ell\geq N+e$. We have
$$f_\ell=-\frac{p_0(\ell-e)f_{\ell-e}+p_1(\ell-e)f_{\ell-e+1}+\cdots+p_{e-1}(\ell-e)f_{\ell-1}}{p_e(\ell-e)}.$$
So $f_\ell=\dfrac{a}{\lcm\{b_{\ell-e},\ldots, b_{\ell-1}\}p_e(\ell-e)}$
 for some $a\in\Z$. There is a positive constant $C$ and $d\in\N$ such that  $|p_e(\ell-e)|\leq C\ell^d$ for all $\ell$ large enough. So the number of primes in the decomposition of $p_e(\ell-e)$ into primes is less than $\log_2(C\ell^d)$
 Therefore, we have
 $$s_{\ell+1}\leq s_\ell+\log_2(C\ell^d).$$
 So we have, since $x\log_2(Cx^d)-\dfrac{d}{\ln(2)}x$ is a primitive of $\log_2(Cx^d)$:
 $$s_{\ell+1}=\sum_{k=0}^{\ell}(s_{k+1}-s_k)+s_0\leq \sum_{k=0}^\ell \log_2(Ck^d)+s_0\leq C(\ell+1)\log_2(C(\ell+1)^d)-\frac{d(\ell+1)}{\log_2}+C'$$
 for some contant $C'$. This proves the statement.
\end{proof}

\begin{remark}
For $n\in\N$ and $k\leq n$,  the number of  integers  less than $n$ that are divisible by $k$ is $\left\lfloor \frac{n}{k}\right\rfloor$. Therefore $n!$ is the product of at least $\sum_{k=1}^n\left\lfloor \frac{n}{k}\right\rfloor\sim n\ln(n)$ prime numbers (counted with multiplicity). Since $\sum_{n\in\N}\frac{1}{n!}x^n$ is $D$-finite, the bound of Theorem \ref{thm:D-eisenstein} is asymptotically sharp.
\end{remark}

\begin{remark}
For a $D$-finite power series $f(x)\sum_{\ell\in\N} \frac{a_\ell}{b_\ell} x^\ell$ one can prove using the same kind of proof that, for a given prime $p$, the $p$-adic valuation of $\frac{a_\ell}{b_\ell} $ is bounded from below by a linear function (see \cite[Theorem 3]{Cla}).
\end{remark}
\subsection{Eisenstein series and local analytic geometry}
Regarding Corollary \ref{cor:eisenstein_several} this is natural to introduce the following notion:

\begin{definition}
Let $A$ be an integral domain, $\K$ be its fraction field, and let $n\in\N^*$. A power series $f(x)\in\K\lb x_1,\ldots, x_n\rb$ is called:
\begin{enumerate}
\item \emph{an Eisenstein series} if  there is $a\in A$ such that $af(ax)\in A\lb x\rb$,
\item \emph{a weakly Eisenstein series} if there is $a\in A$ such that $f(x)\in A_a\lb x\rb$.
\end{enumerate}
\end{definition}

\begin{remark}
The series $f(x)$ si weakly Eisenstein if $f(x)=\sum_{\a\in\N^n} \frac{a_\a}{a^{\b(\a)}}x^\a$ for some function $\b:\N^n\lgw \N$. It is Eisenstein if we can choose the expansion of $f(x)$ in such a way  that $\b(\a)=|\a|+1$. This last condition is equivalent to the fact that we can choose the expansion in  such a way that $\b$ is bounded by a linear function in $|\a|$. Indeed, if $\b(\a)\leq \lambda |\a|+\mu$ where $\lambda$, $\mu\in\N$, we can replace $a$ by $a^{\max\{\lambda,\mu,1\}}$, and for this new denominator, $\b(\a)=|\a|+1$.
\end{remark}

These two kinds of rings are very useful for several reasons:
\begin{enumerate}
\item They allow to work in families and to prove uniform results. More precisely, assume that by working with the ring of convergent power series $\C\{x_1,\ldots, x_n\}$ we are able to prove some local statement $S_0$ valid at the origin 0. Replacing the ring of convergent power series by the ring of (weakly) Eisenstein series where $A$ denotes some ring of functions depending on some indeterminates $\tau$, we may be able to prove the same statement for this new ring. Let $a$ be a common denominator of  the Eisenstein series appearing in the statement.
Then by specializing $\tau=\tau_0\in U$ with $a(\tau_0)\neq 0$ and $U$ some neighborhood of the origin, we recover the statement $S_{\tau_0}$ at the point $\tau_0$. If $U\setminus a^{-1}(0)$ is dense in $U$, this is a way of proving the statement $S_{\tau}$ uniformly in $\tau$ on some open dense subset of $U$. This kind of strategy is used in \cite{Zar, PP, BCR2}.
\item The rings of Eisenstein and weakly Eisenstein series satisfy the Weierstrass division and preparation Theorems, which are key tools in local analytic geometry (see Lemma \ref{lem:weierstrass} below).
\item These two rings not only contain the algebraic closure of $A\lb x\rb$ in $\K\lb x\rb$, but moreover they are algebraically closed in $\K\lb x\rb$.
\end{enumerate}

In practise there is not a big difference between the use of one definition rather than the other one.
 Zariski is apparently the first one to use this notion in \cite[pp. 467-469]{Zar}, in order to develop the theory of what is now called the \emph{Zariski equisingularity} for germs of analytic sets. In his work $A$ denotes the ring of polynomial in infinitely many indeterminates. Then Parusi\'nski and Paunescu \cite{PP} used the same notion to extend the work of Zariski, and in their case $A$ denotes the ring of polynomials $\k[\tau_1,\ldots, \tau_s]$ over a field $\k$. More precisely they used this notion to reduce the study of Zariski equisingularity for algebraic hypersurfaces on an open dense subset $U\subset \C^n$ to the study of Zariski equisingularity locally around a given point.  The very same kind of Eisenstein series has also been used in \cite{Ron2} to construct equisingular families of algebraic power series over $\ovl \Q$. The second definition appears in \cite{BCR2} where $A$ denotes a UFD (in particular some functions ring that is a UFD); and this definition is used to prove a uniform Gabrielov Rank Theorem \cite{BCR2}.

 \begin{lemma}[Weierstrass Preparation and Division Theorem]\label{lem:weierstrass}(cf. \cite[Lemma 3.6]{PP} or \cite[Prop. 4.9]{BCR2})
 
 Let $A$ be an integral domain, $\K$ be its fraction field,  and $f(x)$ and $g(x)\in \K\lb x_1,\ldots, x_n\rb$  be  Eisenstein series. Assume that $f(0,\ldots, 0, x_n)$ is a nonzero series of order $d$. Then:
 \begin{enumerate}
 \item We have 
 $$f(x)=\left(x_n^d+\sum_{i=1}^da_i(x_1,\ldots, x_{n-1})x_n^{d-i}\right)\times u(x)$$
 where $u(x)$ is a unit, that is,  $u(0)\neq 0$, and the $a_i$ are Eisenstein series.
 \item We can write in an unique way
 $$g(x)=f(x)q(x)+r(x)$$
 where $q(x)$ is an Eisenstein series and $r(x)$ is a polynomial in $x_n$ of degree $<d$ whose coefficients are Eisenstein series.
 \end{enumerate}
 The same statement is still true if we replace "Eisenstein series" by "weakly Eisenstein series".
 \end{lemma}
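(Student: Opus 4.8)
The plan rests on the classical Weierstrass theorems, which hold over the formal power series ring $B\lb x_1,\ldots,x_n\rb$ for an \emph{arbitrary} commutative ring $B$, provided $F\in B\lb x\rb$ is $x_n$-regular of order $d$ with the coefficient of $x_n^d$ in $F(0,\ldots,0,x_n)$ a unit of $B$. Indeed, writing $F=\alpha+x_n^d\gamma$ with $\deg_{x_n}\alpha<d$, all coefficients of $\alpha$ lying in $(x_1,\ldots,x_{n-1})$, and $\gamma$ invertible in $B\lb x\rb$, one solves $G=QF+R$ with $\deg_{x_n}R<d$ by the fixed-point iteration $Q=\sum_{k\ge0}L^k\bigl(\gamma^{-1}\sigma(G)\bigr)$, where $\sigma$ divides off $x_n^d$ after discarding the terms of $x_n$-degree $<d$ and $L(h)=-\gamma^{-1}\sigma(h\alpha)$; the sum converges because $L$ sends $(x_1,\ldots,x_{n-1})^k\,B\lb x\rb$ into $(x_1,\ldots,x_{n-1})^{k+1}\,B\lb x\rb$, and uniqueness is proved the same way. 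Preparation follows by dividing $x_n^d$ by $F$. I will take this for granted and only keep track of the ring in which the output coefficients sit.

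For the weakly Eisenstein case: choose $a\in A\setminus\{0\}$ with $f,g\in A_a\lb x\rb$, and pick $c\in A$ with $c/a^m$ equal to the coefficient of $x_n^d$ in $f(0,\ldots,0,x_n)$ for some $m\ge0$, so $c\neq0$ since $A$ is a domain. Over $B:=A_{ac}$ this coefficient becomes a unit, $f$ is $x_n$-regular of order $d$, and $f,g\in B\lb x\rb$; applying the classical preparation and division over $B\lb x\rb$ produces $a_i,u,q,r$ in $A_{ac}\lb x\rb$, hence weakly Eisenstein, and uniqueness over $\K\lb x\rb$ identifies them with the decompositions of the statement.

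For the Eisenstein case, pick $a\in A\setminus\{0\}$ with $F(x):=af(ax_1,\ldots,ax_n)$ and $G(x):=ag(ax_1,\ldots,ax_n)$ both in $A\lb x\rb$. Then $F$ is $x_n$-regular of order $d$ and the coefficient $c_0$ of $x_n^d$ in $F(0,\ldots,0,x_n)$ lies in $A\setminus\{0\}$. Running the classical iteration over $A_{c_0}$, I will establish the linear denominator bound: each coefficient of $x^\beta$ in the Weierstrass data $A_i,U_F,Q_F,R_F$ of $F,G$ lies in $c_0^{-((d+1)|\beta|+1)}A$. This comes from three observations fed into the iteration: $\gamma^{-1}$ inflates $c_0$-denominators only linearly in the multidegree (a geometric series with $\gamma(0)=c_0$); multiplication by $\alpha$ strictly raises the degree in $(x_1,\ldots,x_{n-1})$, which offsets the next application of $\gamma^{-1}$ up to a fixed shift depending only on $d$; and $L^k(h)$ is supported in $(x_1,\ldots,x_{n-1})^k$, so at most $|\beta|$ terms of $\sum_k L^k$ contribute to the coefficient of $x^\beta$. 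By the Remark preceding the lemma, a linear denominator bound is precisely the Eisenstein property, so $A_i,U_F,Q_F,R_F$ are Eisenstein over $A$ with witness a fixed power of $c_0$.

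Finally one undoes the substitution. From $f(x)=a^{-1}F(x_1/a,\ldots,x_n/a)$ one gets $f=\bigl(x_n^d+\sum_{i=1}^d a^i A_i(x_1/a,\ldots,x_{n-1}/a)\,x_n^{d-i}\bigr)\cdot\bigl(a^{-d-1}U_F(x_1/a,\ldots,x_n/a)\bigr)$, a Weierstrass factorization of $f$ with monic polynomial part; and similarly $q(x)=Q_F(x_1/a,\ldots,x_n/a)$ and $r(x)=a^{-1}R_F(x_1/a,\ldots,x_n/a)$ give the division of $g$ by $f$, which is the Weierstrass division by uniqueness over $\K\lb x\rb$. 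A routine divisibility check shows that rescaling by $x_i\mapsto x_i/a$ and multiplying by powers of $a$ keeps all coefficient denominators bounded by a power of $ac_0$ linear in the multidegree, so $a_i,u,q,r$ remain Eisenstein over $A$. The one genuinely delicate point is the linear denominator estimate for the iteration; the bookkeeping becomes transparent once one notes that, for fixed $\lambda,\mu$, the sets $\{h\in\K\lb x\rb : c_0^{\lambda|\beta|+\mu}h_\beta\in A\ \text{for all}\ \beta\}$ are stable under addition, multiplication, the operators $\sigma$ and ``multiply by $\alpha$'', and the geometric-series inversions of $\gamma$ and $Q$ that occur, so the estimate reduces to these elementary operations.
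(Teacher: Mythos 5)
Your proposal is correct: the paper itself gives no proof of this lemma (it only cites \cite{PP} and \cite{BCR2}), and your argument --- reduce to the classical Weierstrass division over a localization of $A$, then track denominators of powers of the leading coefficient $c_0$ through the fixed-point iteration, using that $L^k$ lands in $(x_1,\ldots,x_{n-1})^k$ so that only $k\leq|\beta|$ terms contribute to a given coefficient --- is exactly the route taken in those references. The only cosmetic imprecision is that the sets $\{h : c_0^{\lambda|\beta|+\mu}h_\beta\in A\}$ are stable under multiplication only up to adding the intercepts $\mu$ (and inversion increases the slope $\lambda$), but since each contributing term involves a number of such operations bounded linearly in $|\beta|$, the final bound is still linear and the conclusion stands; your stated constant $(d+1)|\beta|+1$ is off by a harmless bounded amount.
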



\subsection{Algebraic closure of the field of power series: Newton-Puiseux and MacDonald Theorems}
The aim of this part is to present some results of \cite{Ron1} and provide new and  simpler proofs of them, which are based on Theorem \ref{thm:eisenstein_general}.\\
Let $x=(x_1,\ldots, x_n)$ be a $n$-uple of indeterminates. \\
When $n=1$, the Newton-Puiseux Theorem asserts that the roots of any polynomial $P(x,y)\in\C(\!(x)\!)[y]$ are \emph{Puiseux series}. By definition a Puiseux series is just a series in $\C(\!(x^{\frac1q})\!)$ for some $q\in\N^*$, that is, a series of the following form:
$$a_{\ell_0}x^{\frac{\ell_0}{q}}+a_{\ell_0+1}x^{\frac{\ell_0+1}{q}}+a_{\ell_0+2}x^{\frac{\ell_0+2}{q}}+\cdots$$
where $\ell_0\in\Z$, and the $a_{\ell}$ belong to $\C$. On the other hand, every Puiseux series is algebraic over $\C(\!(x)\!)$ since  $\C(\!(x^{\frac1q})\!)\simeq\C(\!(x)\!)[x^{\frac1q}]$ is finite of degree $q$ over $\C(\!(x)\!)$. Therefore, an algebraic closure of $\C(\!(x)\!)$ is
$$\bigcup_{q\in\N^*}\C(\!(x^{\frac{1}{q}})\!).$$

When $n\geq 2$,  there is no general known description of an algebraic closure of $\C(\!(x)\!)$, elements algebraic over $\C(\!(x)\!)$ cannot be expanded as Puiseux series as $\sqrt{x_1+x_2}$. The main problem is that the field $\C(\!(x)\!)$ is no longer a complete field for the $(x)$-adic topology when $n\geq 2$. This last sentence requires some explanations:\\
The field $\C(\!(x)\!)$ can be equipped with the \emph{$(x)$-adic topology}. This one is induced by the norm defined by
$$\forall f,g\in\C\lb x\rb\setminus\{0\},\ \ \  \left|\frac{f}{g}\right|:=e^{-\ord(f/g)}$$
where $\ord(f)$ denotes the usual order of the power series $f$, and $\ord(f/g)=\ord(f)-\ord(g)$. Therefore, the higher is the order of $f$, the smaller is the norm of $f$. 

Now, the completion of $\C(\!(x)\!)$ with respect to the topology induced by $|\cdot|$, denoted by $\wdh{\C(\!(x)\!)}$, is the set of series written as
\begin{equation}\label{series-completion}\frac{f_{\ell_0}}{g_{\ell_0}}+\frac{f_{\ell_0+1}}{g_{\ell_0+1}}+\frac{f_{\ell_0+2}}{g_{\ell_0+2}}+\frac{f_{\ell_0+3}}{g_{\ell_0+3}}+\cdots\end{equation}
where $\frac{f_{\ell}}{g_{\ell}}=0$ or $\ord(f_{\ell}/g_{\ell})=\ell\in\Z$, and $\ell_0\in\Z$.
For a nonzero series $g$, we define $\ini(g)$ to be the \emph{initial term of $g$}, that is, the homogeneous polynomial of smallest degree in the expansion of $g$. Hence, $g=\ini(g)+h$ where $\ord(h)>\ord(g)$. So, we have
$$\frac{f}{g}=\frac{f}{\ini(g)}\times \frac{1}{1+\frac{h}{\ini(g)}}=\frac{f}{\ini(g)}\times \sum_{k=0}^\infty \left(-\frac{h}{\ini(g)}\right)^k=\frac{f}{\ini(g)}\times \sum_{k=0}^\infty \left(-\frac{h_0+h_1+h_2+\cdots}{\ini(g)}\right)^k$$
where  $h_d$ is the homogeneous polynomial of degree $d$ in the expansion of $h$.
Therefore, any series as in \eqref{series-completion} can be rewritten in such a way that  the $f_\ell$ and $g_\ell$ are homogeneous polynomials.\\
When $n=1$, then any homogeneous polynomial is a monomial, and each series as in \eqref{series-completion} is a Laurent series, that is, an element of $\C(\!(x)\!)$. When $n\geq 2$, this no longer the case: for example the series
$$\frac{x_1}{x_2}+\frac{x_1^2}{x_2^2}+\frac{x_1^6}{x_2^3}+\cdots+\frac{x_1^{\ell !}}{x_2^\ell}+\cdots\in \wdh{\C(\!(x)\!)}\setminus \C(\!(x)\!).$$
The fact that $\C(\!(x)\!)$ is not complete for $n\geq 2$, does not enable to use  the Newton method, on the contrary of the case $n=1$. The use of the Newton method imposes to be able to describe or to characterize the elements of $\wdh{\C(\!(x)\!)}$ that are algebraic over $\C(\!(x)\!)$.

We can generalize this definition by associating real positive numbers   $\o_i$ (called \emph{weights}) to the  indeterminates. If $\o_1$, \ldots, $\o_n>0$, we define
$$\nu_\o\left(\sum_{\a\in\N^n}f_\a x^\a\right):=\min\left\{\langle \o,\a\rangle\mid f_\a\neq 0\right\}$$
and $\nu_\o(f/g)=\nu_\o(f)-\nu_\o(g)$ for every power series $f$ and $g$. This $\nu_\o$ is a valuation and it induces an other norm:
$$\forall f,g\in\C\lb x\rb\setminus\{0\},\ \ \  \left|\frac{f}{g}\right|_\o:=e^{-\nu_\o(f/g)}.$$
\begin{remark}\label{expansion_completion}
We can write the elements of the completion of $\C(\!(x)\!)$ with respect to this norm, denoted by $\wdh{\C(\!(x)\!)}^\o$, as in \eqref{series-completion}, but the $f_\ell$ and $g_\ell$ are \emph{$\o$-weighted homogeneous} (a polynomial $f(x)$ is $\o$-weighted homogeneous if,  $\langle \o,\a\rangle=\langle \o,\b\rangle$ for all nonzero monomials $C_1x^\a$ et $C_2x^\b$ of $f(x)$). More precisely, the elements of $\wdh{\C(\!(x)\!)}^\o$ are written as
\begin{equation}\label{series-completion_omega}\frac{f_{\ell_0}}{g_{\ell_0}}+\frac{f_{\ell_1}}{g_{\ell_1}}+\frac{f_{\ell_2}}{g_{\ell_2}}+\frac{f_{\ell_3}}{g_{\ell_3}}+\cdots\end{equation}
where the $f_\ell$ and $g_\ell$ are $\o$-weighted homogeneous, $\nu_\o(f_\ell/g_\ell)=\ell$,  and $\lim_{\infty}\ell_k=\infty$.
\end{remark}

Then, the following result is proved in the very same way as Newton-Puiseux Theorem (see \cite[Prop. 3.28]{Ron1} for example):

\begin{theorem}[Generalized Newton-Puiseux Theorem]
Let $\o\in\R_{>0}^n$ with $\Q$-linearly independent coordinates. The roots of a  polynomial with coefficients in $\C(\!(x)\!)$ are Puiseux series of the form $\xi(x_1^{\frac1q},\ldots, x_n^{\frac1q})$ for some
for some $q\in\N^*$ and $\xi(x)\in\wdh{\C(\!(x)\!)}^\o$.
\end{theorem}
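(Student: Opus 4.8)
The plan is to adapt, essentially line for line, the classical Newton--Puiseux algorithm, the two new features being that the Newton polygon is formed with respect to the valuation $\nu_\o$ rather than the $x$-adic order, and that --- since $\C(\!(x)\!)$ fails to be complete for $|\cdot|_\o$ as soon as $n\geq 2$ --- the algorithm must be run inside the completion $\wdh{\C(\!(x)\!)}^\o$. I would first record the structural features of that completion on which everything rests: it is complete for $|\cdot|_\o$, hence Henselian; its value group is $G:=\Z\o_1+\cdots+\Z\o_n\subset\R$, an archimedean ordered group which, because the $\o_i$ are $\Q$-linearly independent, is free of rank $n$, every element of $G\otimes_\Z\Q=\Q\o_1\oplus\cdots\oplus\Q\o_n$ having a \emph{unique} expression $\langle\o,\b\rangle$ with $\b\in\Q^n$; and its residue field is the algebraically closed field $\C$. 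Uniqueness of the expression $\mu=\langle\o,\b\rangle$ makes the operations ``choose $q$ with $q\mu\in G$'' and ``rescale $x_i\mapsto x_i^{1/q}$'' unambiguous, and one checks immediately that $\wdh{\C(\!(x^{1/q})\!)}^\o$ is again a field of exactly the same type, so the class of fields in play is stable under all the operations below.

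Then come the standard reductions: dividing by the leading coefficient I may assume the given polynomial $P$ is monic of degree $d$; replacing $P$ by $P/\gcd(P,P')$, which has the same roots, I may assume $P$ is separable; and since the characteristic is $0$ a Tschirnhaus translation $y\mapsto y-\tfrac1d a_{d-1}$, which merely adds a known element of $\wdh{\C(\!(x)\!)}^\o$ to the root sought, lets me assume $a_{d-1}=0$. It is convenient to prove, by induction on $d$, the slightly stronger statement that \emph{for every $q\in\N^*$ every monic separable $P\in\wdh{\C(\!(x^{1/q})\!)}^\o[y]$ has all of its roots in $\bigcup_{q'\in\N^*}\wdh{\C(\!(x^{1/q'})\!)}^\o$}; the case $q=1$ is the theorem. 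For $d=1$ the root is minus the constant term. For $d\geq 2$ I may further assume $a_0\neq 0$ (otherwise factor off $y$ and use induction) and I would form the Newton polygon of $P$, the lower convex hull of the points $(i,\nu_\o(a_i))$. If it has several edges, the Hensel / Newton-polygon factorization --- legitimate precisely because the ground field is Henselian --- writes $P$ as a product of monic factors over that same field whose degrees are the horizontal lengths of the edges, all $<d$, and induction finishes the argument. If the polygon is a single edge of slope $-\mu$, I pick $q_1$ with $q_1\mu\in G$, rescale $x_i\mapsto x_i^{1/q_1}$ and substitute $y=x^{\b}y_1$ (where $\mu=\langle\o,\b\rangle$, $\b\in\Z^n$), obtaining after division by the common monomial factor a monic separable $P_1(y_1)$ whose coefficients all have nonnegative $\nu_\o$-value. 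Its reduction $\ovl{P_1}\in\C[y_1]$ is then monic of degree $d$, has no $y_1^{d-1}$ term (because $a_{d-1}=0$) and has nonzero constant term (because the left endpoint of the edge is $(0,\nu_\o(a_0))$ with $a_0\neq 0$); such a polynomial cannot be a $d$-th power of a linear polynomial, hence has at least two distinct roots in $\C$, and Hensel's lemma splits $P_1$ into monic separable factors of degree $<d$ over $\wdh{\C(\!(x^{1/qq_1})\!)}^\o$. Induction applies, and retracing the (invertible) substitutions and rescalings exhibits every root of $P$ in the form $\xi(x_1^{1/q'},\ldots,x_n^{1/q'})$ with $\xi\in\wdh{\C(\!(x)\!)}^\o$; as a by-product $\bigcup_{q'}\wdh{\C(\!(x^{1/q'})\!)}^\o$ turns out to be algebraically closed.

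The step I expect to demand the most care is the one on which the whole scheme pivots: the legitimacy of the Newton-polygon / Hensel factorization, i.e.\ that $\wdh{\C(\!(x)\!)}^\o$ and all its finite rescalings satisfy Hensel's lemma. This is the conceptual reason the theorem must be phrased over the completion rather than over $\C(\!(x)\!)$ itself; it follows from the completeness of $\wdh{\C(\!(x)\!)}^\o$ for $|\cdot|_\o$ together with the fact that $\nu_\o$, thanks to the $\Q$-linear independence of the weights, is a genuine rank-one (archimedean) valuation, so that the usual successive-approximation proof of Hensel's lemma converges exactly as in the classical discretely-valued setting. Once that is in hand the degree drops strictly at each stage of the induction, so no convergence or transfinite argument is needed and the ramification bound $q'$ is automatically finite, a product of at most $d$ slope denominators. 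I would keep in reserve the purely valuation-theoretic variant: for $P$ irreducible a root $\a$ generates a finite extension $\wdh L$ of $\wdh{\C(\!(x)\!)}^\o$ which, being defectless in equal characteristic zero with residue field $\C$, satisfies $[\wdh L:\wdh{\C(\!(x)\!)}^\o]=e$, the ramification index; hence its value group lies in $\tfrac1e G$, and a structure theorem for complete equicharacteristic valued fields identifies $\wdh L$ with a subfield of $\wdh{\C(\!(x^{1/e})\!)}^\o$, which again gives the conclusion.
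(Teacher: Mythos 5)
Your proof is correct and follows exactly the route the paper intends: the paper gives no argument of its own, stating only that the result ``is proved in the very same way as Newton--Puiseux Theorem'' (citing \cite[Prop. 3.28]{Ron1}), and your weighted Newton-polygon/Hensel argument run inside the complete rank-one valued field $\wdh{\C(\!(x)\!)}^\o$ is precisely that adaptation. The structural facts you isolate --- Henselianity from completeness for the rank-one valuation $\nu_\o$, residue field $\C$ and value group $\Z\o_1+\cdots+\Z\o_n$ coming from the $\Q$-linear independence of the weights --- are exactly what makes the classical algorithm carry over.
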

In the case $n=1$ we can choose $\o_1=1$ and we recover Newton-Puiseux Theorem since $\wdh{\C(\!(x)\!)}^\o=\C(\!(x)\!)$ in this case.\\
We can state an analogue of this result for any vector of weights $\o\in\R_{>0}^n$, but we need to introduce some technical material, and we will not do it here. What we need to remember, is that one can describe an algebraic closure of $\wdh{\C(\!(x)\!)}^\o$ (more easily in the case where the $\o_i$ are $\Q$-linearly independent). Therefore, a natural problem is to describe the algebraic closure of $\C(\!(x)\!)$ in $\wdh{\C(\!(x)\!)}^\o$, or characterize the elements of $\wdh{\C(\!(x)\!)}^\o$ that are algebraic over $\C(\!(x)\!)$. One answer is given by the following theorem (this is a particular case of \cite[Theorem 5.12]{Ron1}) that is an easy corollary of  Eisenstein's Theorem \ref{thm:eisenstein_general}:
\begin{theorem}\label{thm:completion}
Let $\o\in\R_{>0}^n$. Let $\xi\in\wdh{\C(\!(x)\!)}^\o$ be algebraic over $\C(\!(x)\!)$. Then there is a $\o$-weighted homogeneous polynomial $a(x)\in\C[x]$ such that
$$\xi=\sum_{\ell=\ell_0}^\infty\frac{a_\ell(x)}{a(x)^{\lceil\ell-\ell_0\rceil+1}}$$
where the $a_\ell(x)$ are $\o$-weighted homogenous polynomials and $\nu_\o\left(\frac{a_\ell(x)}{a(x)^{\lceil\ell-\ell_0\rceil+1}}\right)=\ell$.
\end{theorem}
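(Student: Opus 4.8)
The idea is to reduce Theorem~\ref{thm:completion} to Theorem~\ref{thm:eisenstein_general} via a substitution that turns $\o$-weighted homogeneous polynomials into monomials in a single indeterminate. First I would set up the ring map: choose a single new indeterminate $t$ and consider the substitution $x_i\mapsto t^{\o_i}$ (for the moment suppose the $\o_i$ are $\Q$-linearly independent; the general case is handled at the end by a perturbation). Under this map a nonzero $\o$-weighted homogeneous polynomial $g(x)$ of $\o$-degree $\delta$ goes to $c\,t^{\delta}$ for some $c\in\C^*$, so it becomes a unit times a power of $t$; in particular the ring of $\o$-weighted homogeneous quotients embeds into $\C\lb t^{\R_+}\rb$ (after clearing the finitely many negative exponents that may occur, i.e. working in $\C\lb t^{\R_+}\rb$ possibly shifted by $t^{\ell_0}$). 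Concretely, writing $\xi$ in the form \eqref{series-completion_omega}, its image is $\sum_k \tilde c_k t^{\ell_k}$ with $\tilde c_k\in\C^*$, $\ell_k\to\infty$, so up to multiplying by $t^{-\ell_0}$ we get an element of $\C\lb t^{\R_+}\rb$.

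The second step is to transport the algebraicity hypothesis. Since $\xi$ is algebraic over $\C(\!(x)\!)$, it satisfies $P(x,\xi)=0$ for some $0\ne P\in\C(\!(x)\!)[y]$; clearing denominators we may take $P\in\C\lb x\rb[y]$, indeed we may take $P$ irreducible. Applying the substitution termwise and using that $\C\lb x\rb$ maps into $\C\lb t^{\R_+}\rb$, the image $\hat\xi$ of $\xi$ satisfies an algebraic equation over (a localization of) $\C\lb t^{\R_+}\rb$; since $\C(\!(x)\!)$ has characteristic zero every algebraic element is separable, and separability is preserved, so $\hat\xi$ is algebraic and separable over $A\lb t^{\R_+}\rb$ with $A=\C[x]$ or directly over $\C\lb t^{\R_+}\rb$ — I would check that the hypotheses of Theorem~\ref{thm:eisenstein_general} apply with $A$ an appropriate subring whose fraction field contains all the needed coefficients. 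Theorem~\ref{thm:eisenstein_general} then yields $a\in A$ with $a^{\lceil s\rceil+1}a_s\in A$ for every exponent $s$ appearing in $\hat\xi$.

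The third step is to read this back in the original variables. The element $a$ produced lives in $\C[x]$; I need it to be $\o$-weighted homogeneous, which I would arrange by taking for $a$ an $\o$-weighted homogeneous polynomial whose $\o$-degree is a suitable positive number — the flexibility in choosing $a$ (one may replace $a$ by any multiple) lets one pass from a bound $\b(s)\le\la s+\mu$ to exactly $\lceil s-\ell_0\rceil+1$, exactly as in the Remark following Corollary~\ref{cor:eisenstein_several}. Pulling back $t^{\,\o\text{-degree}}$ to a monomial and then to an $\o$-weighted homogeneous $a(x)$, the denominators $a^{\lceil\ell-\ell_0\rceil+1}$ have the right $\o$-degree so that each summand $a_\ell(x)/a(x)^{\lceil\ell-\ell_0\rceil+1}$ is again $\o$-weighted homogeneous of $\nu_\o$ equal to $\ell$, which is the asserted form.

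The main obstacle I anticipate is the case where the weights $\o_i$ are \emph{not} $\Q$-linearly independent: then the substitution $x_i\mapsto t^{\o_i}$ is no longer injective on monomials, distinct $\o$-weighted homogeneous polynomials can collapse, and one cannot directly recover $\xi$ from $\hat\xi$. I would handle this by perturbing: pick $\o'\in\R_{>0}^n$ with $\Q$-linearly independent coordinates that is $\o$-generic in the sense that it separates the finitely (or countably, controlled degree by degree) many monomials relevant at each level and refines the $\o$-weighted homogeneous decomposition; run the argument with $\o'$, and observe that an $\o'$-weighted homogeneous refinement of each $a_\ell(x)$ regroups into the $\o$-weighted homogeneous pieces, while the denominator $a(x)$ can be chosen $\o$-weighted homogeneous from the start (its $\o'$-degree controls the $\o$-degree). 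The bookkeeping that the exponent bound survives this regrouping — i.e. that $\lceil\ell-\ell_0\rceil+1$ with respect to $\nu_\o$ is still an upper bound after passing through $\nu_{\o'}$ — is the one place requiring care, but it is exactly the linear-bound manipulation already used in the proof of Theorem~\ref{thm:eisenstein_general} and in the Remark after Corollary~\ref{cor:eisenstein_several}.
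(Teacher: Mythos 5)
There is a genuine gap, and it sits exactly in the choice of substitution. You send $x_i\mapsto t^{\o_i}$, which collapses every $\o$-weighted homogeneous quotient $\frac{a_\ell(x)}{b_\ell(x)}$ of weight $\ell$ to $\tilde c_\ell\, t^{\ell}$ with $\tilde c_\ell\in\C$. After this substitution the natural coefficient ring is the \emph{field} $\C$, over which Theorem \ref{thm:eisenstein_general} is vacuous (any nonzero $a\in\C$ satisfies $a^{\lceil s\rceil+1}a_s\in\C$), and — worse — the denominators $b_\ell(x)$, which are precisely what the theorem asks you to control, have been erased: the $t$-exponent of $\frac{a_\ell(x)}{b_\ell(x)}$ is $\ell$ no matter what $b_\ell$ is, so no choice of $A$ and no amount of "pulling back" recovers them. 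Your sentence "$A=\C[x]$ or directly over $\C\lb t^{\R_+}\rb$" glosses over this: with $x_i\mapsto t^{\o_i}$ there is no $x$ left, so $A=\C[x]$ is not available. The paper's proof uses the substitution $x_i\mapsto t^{\o_i}x_i$ instead. This keeps $A=\C[x]$ and $\K=\C(x)$ as coefficient ring and fraction field, turns $P$ into an element of $\C[x]\lb t^{\R_+}\rb[y]$, and writes $\xi(t^{\o}x)=\sum_{\ell}\frac{a_\ell(x)}{b_\ell(x)}t^{\ell}\in\K\lb t^{\R_+}\rb$, so that Theorem \ref{thm:eisenstein_general} applied over $A=\C[x]$ produces a genuine polynomial $a(x)$ with $b_\ell(x)\mid a(x)^{\lceil\ell\rceil+1}$ — which is the content of the statement.

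A secondary consequence: the "main obstacle" you identify (weights not $\Q$-linearly independent) and the whole perturbation-to-$\o'$ apparatus are artifacts of the wrong substitution. With $x_i\mapsto t^{\o_i}x_i$ nothing collapses for any $\o\in\R^n_{>0}$, since the coefficient of $t^{\ell}$ is by construction the full $\o$-weight-$\ell$ piece of $\xi$; no genericity or regrouping argument is needed. The one reduction the paper does perform, and which you should keep, is discarding the finitely many terms of negative weight so as to land in $\K\lb t^{\R_+}\rb$ (exponents in $\R_+$), absorbing their denominators into $a(x)$ at the end; your $t^{-\ell_0}$ shift plays the same role.
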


\begin{proof}[Proof of Theorem \ref{thm:completion}]
Let us write $\xi(x)=\sum_{\ell=\ell_0}^\infty \frac{a_\ell(x)}{b_\ell(x)}$ where the $a_\ell(x)$ and $b_\ell(x)$ are $\o$-weighted homogeneous and the weight of $\frac{a_\ell(x)}{b_\ell(x)}$ equals $\ell$. In the expansion of $\xi(x)$, there is only finitely many $\ell$ with $\langle\o,\ell\rangle <0$ for which $\frac{a_\ell(x)}{b_\ell(x)}\neq 0$. Therefore, we only need to prove the result when $\ell_0\geq 0$. Indeed, then it is enough to multiply $a(x)$ by the $b_\ell(x)$ for $\ell<0$.

Assume that $P(x,\xi(x))=0$ where $P(x,y)\in\C\lb x\rb[y]$. Then 
$$P(t^\o x,\xi(t^\o x))=0$$ where
$t^\o x:=(t^{\o_1}x_1,\ldots, t^{\o_n}x_n)$
for a new indeterminate $t$. We set $Q(t,y)=P(t^\o x,y)\in\C[x]\lb t^{\R_+}\rb[y]$. By Remark \ref{expansion_completion}
$$\xi(t^\o x)=\sum_{\ell\geq 0}^\infty\frac{a_\ell(x)}{b_{\ell}(x)}t^{\ell}\in \C[x]\lb t^{\R_+}\rb.$$

 Let $A:=\C[x]$ and $\K=\C(x)$. 
By Theorem \ref{thm:eisenstein_general}, since $\xi(t^\o x)$ is algebraic and separable over $A\lb t^{\R_+}\rb$, there exists $a(x)\in\C[x]$ such that $b_\ell(x)$ divides $a(x)^{\lceil \ell\rceil+1}$. This proves the result.
\end{proof}


In the case where the $\o_i$ are $\Q$-linearly independent, we 
obtain the following  result (that extends a result of MacDonald \cite{McD} - see also \cite[Theorem 6.9]{Ron1}):

\begin{corollary}[MacDonald Theorem]
Let $\o\in\R^n_{>0}$ such that the $\o_i$ are $\Q$-linearly independent. Then the elements of $\wdh{\C(\!(x)\!)}^\o$ are Laurent series $\xi=\sum_{\a\in\Z^n}\xi_\a x^\a$ whose support:
$$\Supp(\xi):=\{\a\in\Z^n\mid \xi_\a\neq 0\}$$
verifies $\Supp(\xi)\subset \g+\si$ where $\g\in\Z^n$ and $\si$ is a rational strongly convex cone, that is,
$$\si:=\{\la_1s_1+\cdots+\la_r s_r\mid \la_i\in \R_{\geq 0}\}$$
where the $s_i\in\Z^n$, and $\si$ does not contain a line.
\end{corollary}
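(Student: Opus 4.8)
The plan is to deduce the statement from Theorem~\ref{thm:completion}. Fix an element $\xi$ of $\wdh{\C(\!(x)\!)}^\o$ that is algebraic over $\C(\!(x)\!)$, so that Theorem~\ref{thm:completion} applies: there are $\o$-weighted homogeneous polynomials $a(x)$ and $a_\ell(x)$ in $\C[x]$, the $\ell$ forming an increasing sequence of reals tending to $+\infty$, such that $\xi=\sum_\ell a_\ell(x)/a(x)^{m_\ell}$ with $m_\ell:=\lceil\ell-\ell_0\rceil+1$ and $\nu_\o\!\left(a_\ell(x)/a(x)^{m_\ell}\right)=\ell$. The first step is to use the $\Q$-linear independence of the $\o_i$ to observe that every $\o$-weighted homogeneous polynomial is a scalar times a single monomial: if $x^\a$ and $x^\b$ with $\a,\b\in\N^n$ have the same $\o$-weight, then $\langle\o,\a-\b\rangle=0$ forces $\a=\b$. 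Hence $a(x)=c\,x^\d$ and $a_\ell(x)=c_\ell\,x^{\mu_\ell}$ with $\d,\mu_\ell\in\N^n$, $c\in\C^*$ and $c_\ell\in\C$, so that, putting $\g_\ell:=\mu_\ell-m_\ell\d\in\Z^n$, one gets
$$\xi=\sum_\ell \frac{c_\ell}{c^{m_\ell}}\,x^{\g_\ell}.$$
Since $\langle\o,\g_\ell\rangle=\ell$, distinct values of $\ell$ give distinct exponents; thus $\xi$ is a genuine Laurent series and $\Supp(\xi)\subset\{\g_\ell\}$.

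The heart of the proof is the construction of the cone. From $\mu_\ell=\g_\ell+m_\ell\d\ge0$ componentwise, from $\d_i\ge0$, and from $m_\ell\le\langle\o,\g_\ell\rangle-\ell_0+2$ (which holds because $\langle\o,\g_\ell\rangle=\ell\ge\ell_0$ and $\lceil t\rceil\le t+1$), one obtains, for every $i\in\{1,\dots,n\}$,
$$(\g_\ell)_i\ \ge\ -m_\ell\d_i\ \ge\ -\bigl(\langle\o,\g_\ell\rangle-\ell_0+2\bigr)\d_i,$$
i.e. $\langle w_i,\g_\ell\rangle\ge(\ell_0-2)\d_i$, where $w_i:=\e_i+\d_i\o$ and $\e_i$ is the $i$-th standard basis vector of $\R^n$. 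The $n$ vectors $w_1,\dots,w_n$ form a basis of $\R^n$: the matrix with rows $w_i$ is $\Id+\d\,\o^{T}$, whose determinant equals $1+\langle\o,\d\rangle>0$. Therefore $\tau:=\{v\in\R^n\mid\langle w_i,v\rangle\ge0\ \text{for }i=1,\dots,n\}$ is a full-dimensional simplicial, hence strongly convex, cone. Picking $\g\in\Z^n$ with $\langle w_i,\g\rangle\le(\ell_0-2)\d_i$ for all $i$ — such a $\g$ exists because $\{v\mid\langle w_i,v\rangle\le(\ell_0-2)\d_i\ \forall i\}$ is a translate of the full-dimensional cone $-\tau$ and hence contains lattice points — one gets $\Supp(\xi)\subset\g+\tau$.

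It remains to replace $\tau$ by a \emph{rational} cone. Since $\tau$ is strongly convex, there is $u\in\Q^n$ with $\langle u,v\rangle>0$ for all $v\in\tau\setminus\{0\}$; then $\tau\cap\{\langle u,\cdot\rangle=1\}$ is a bounded convex subset of a rational affine hyperplane, hence is contained in a polytope $P$ with vertices in $\Q^n$. The cone $\si$ generated by (integer multiples of) the vertices of $P$ is then a rational strongly convex cone with $\tau\subset\si$, so $\Supp(\xi)\subset\g+\si$, which is the desired conclusion. The degenerate case $\d=0$, where $a(x)$ is a nonzero constant, needs no separate treatment: there $w_i=\e_i$, $\tau=\R_{\ge0}^n$ is already rational, and one recovers $\xi\in\C\lb x\rb$.

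The genuinely delicate point is the second paragraph: one has to notice that combining the integrality constraint $\mu_\ell\ge0$ with the essentially linear control of $m_\ell$ by $\langle\o,\g_\ell\rangle$ produces exactly $n$ linear inequalities on $\g_\ell$ whose normals $w_i$ are linearly independent. The remaining ingredients — the reduction of $\o$-weighted homogeneous polynomials to monomials, and the enlargement of the ``irrational'' simplicial cone $\tau$ to a rational strongly convex cone — are routine.
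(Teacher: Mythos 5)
Your proof is correct and follows essentially the same route as the paper: both deduce the statement from Theorem \ref{thm:completion}, use the $\Q$-linear independence of the $\o_i$ to reduce $a(x)$ to a single monomial, trap the support in the simplicial (hence strongly convex) cone with normals $e_i+\d_i\o$ --- the paper realizes this same cone as the inverse image of $\R_{\geq 0}^n$ under the monomial map with matrix $1\!\!1_n+\la\o\b^\perp$ --- and then enlarge this possibly irrational cone to a rational strongly convex one. Your write-up merely makes explicit two points the paper leaves implicit, namely the choice of the translation vector $\g$ and the rationalization of the cone.
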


\begin{proof}
Let us remark that, when the $\o_i$ are $\Q$-linearly independent,  $\nu_\o(x^\a)=\nu_\o(x^\b)$ if and only of $\a=\b$. Therefore, the only $\o$-weighted homogeneous polynomials are the monomials. Thus, in Theorem \ref{thm:completion}, $a(x)$ is a monomial, let us say $a(x)=x^\b$.

Let $\xi\in \wdh{\C(\!(x)\!)}^\o$     be algebraic over $\C(\!(x)\!)$. By removing finitely many monomials from $\xi$, we may assume that $\xi=\sum_{\ell>0}\frac{a_\ell(x)}{x^{(\lceil\ell-\ell_0\rceil+1)\b}}$.

Let $\la\in\N^*$. The monomial map
$$\psi : (x_1,\ldots, x_n)\lgm (x_1x^{\la\o_1\b},\ldots, x_nx^{\la \o_n\b})$$
sends $\xi$ onto $\sum_{\ell>0} x^{(\la \ell-\lceil\ell-\ell_0\rceil+1)\b}a_\ell(x)$. So the support of $\psi_\la(\xi)$ is in $\R_{\geq 0}^n$ if $\la$ is large enough. The matrix of the monomial map $\psi_\la$ is $1\!\!1_n+\la \o\b^\perp$. Let $\chi(\la):=\det(1\!\!1_n+\la \o\b^\perp)$. Then $\chi(\la)$ is a polynomial of degree $\leq n$, and $\chi(0)\neq0$. Thus $\chi(\la)\neq 0$ for $\la\in\N$ large enough. So the monomial map $\psi_\la$ is invertible, and its inverse is a monomial map whose matrix is $(1\!\!1_n+\la \o\b^\perp)^{-1}$. Therefore, the support of $\xi$ is included in the image $C$ of $\R_{\geq 0}^n$ under a linear map defined over $\Q(\o_1,\ldots, \o_n)$. Thus $C$ is a polyhedral strongly convex cone. We can embed $C$ is a slightly larger rational polyhedral cone that is strongly convex. This proves the result.
\end{proof}

\subsection{Jung-Abhyankar Theorem and a question of Teissier}
When $n\geq 2$, there is a case where the roots of a polynomial with coefficients in $\C(\!(x)\!)$ can be expanded as Puiseux series. This is the following result:
\begin{theorem}[Jung-Abhyankar Theorem]
Let $x=(x_1,\ldots, x_n)$ and $y$ be one single indeterminate, and let $P(x,y)\in\C\lb x\rb[y]$ be monic in $y$. Assume that the discriminant of $P$ with respect to $y$ is of the form $x^\a u(x)$ where $u(0)\neq 0$. Then the roots of $P$ are in $\C\lb x^{\frac1q}\rb$ for some $q\in\N^*$ (here $x^{\frac1q}:=(x_1^{\frac1q},\ldots, x_n^{\frac1q})$).
\end{theorem}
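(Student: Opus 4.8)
The plan is to reduce the several-variable statement to a combination of the generalized Newton--Puiseux Theorem (to locate the roots inside a completion $\wdh{\C(\!(x)\!)}^\o$) and Eisenstein's Theorem for generalized series, Theorem \ref{thm:eisenstein_general}, which controls the denominators and forces the roots to be genuine Puiseux series rather than arbitrary generalized series. First I would fix a weight vector $\o\in\R_{>0}^n$ with $\Q$-linearly independent coordinates, so that by the Generalized Newton--Puiseux Theorem each root of $P$ has the form $\xi(x_1^{1/q},\dots,x_n^{1/q})$ with $\xi\in\wdh{\C(\!(x)\!)}^\o$; after replacing $x_i$ by $x_i^{q}$ we may as well assume the roots themselves lie in $\wdh{\C(\!(x)\!)}^\o$ and are algebraic over $\C(\!(x)\!)$. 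The whole point is then to show these roots in fact lie in $\C\lb x^{1/q'}\rb$ for some $q'$, i.e. that only finitely many denominators $b_\ell(x)$ occur and that, after clearing a monomial, the exponents land in a sublattice of $\frac1{q'}\Z^n$ contained in the positive orthant.

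The key step is to exploit the hypothesis on the discriminant. Write $P(x,y)=\prod_{i}(y-\xi_i)$ over a splitting field; then $\operatorname{disc}_y(P)=\pm\prod_{i<j}(\xi_i-\xi_j)^2$, and the assumption says this product is $x^\a u(x)$ with $u$ a unit. Since each factor $\xi_i-\xi_j$ is again an algebraic element of $\wdh{\C(\!(x)\!)}^\o$, I would apply Theorem \ref{thm:completion}: each difference $\xi_i-\xi_j$ is a series $\sum_\ell a_\ell(x)/a(x)^{\lceil\ell-\ell_0\rceil+1}$ with $\o$-weighted homogeneous numerators and a single $\o$-weighted homogeneous denominator $a(x)$. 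The product of these differences being a monomial times a unit forces, monomial-by-monomial, that each difference $\xi_i-\xi_j$ has its leading term equal to a monomial times a unit — more precisely its initial form is invertible up to a monomial — and this rigidity propagates: one shows that each $\xi_i-\xi_j$ is itself (after factoring out a monomial) a unit in $\C\lb x^{1/q'}\rb$, hence lies in that ring. The clean way to package this is the standard trick of introducing an auxiliary variable to make the discriminant a unit times a power of the last variable, then doing a Weierstrass-type argument; but with Eisenstein's Theorem in hand the bookkeeping is that the denominators $a(x)$ appearing in Theorem \ref{thm:completion} must all divide a fixed power of $a$, so only finitely many of them are genuinely needed, and the exponents in the resulting Laurent expansion are confined to a finitely generated subsemigroup of $\frac1{q'}\Z^n_{\ge0}+\g$.

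From the control on the differences $\xi_i-\xi_j$ one recovers the roots themselves: fixing one root $\xi_1$, each other root is $\xi_1$ plus an element of $\C\lb x^{1/q'}\rb$, so it suffices to show $\xi_1\in\C\lb x^{1/q'}\rb$. For this I would use that $P$ is monic and that the elementary symmetric functions of the $\xi_i$ are the coefficients of $P$, which lie in $\C\lb x\rb$; combining the already-established structure of the pairwise differences with the integrality of these symmetric functions pins down each $\xi_i$ individually, using that $\C\lb x^{1/q'}\rb$ is integrally closed in $\wdh{\C(\!(x)\!)}^\o$ (which follows from Eisenstein's Theorem, as noted in the discussion of weakly Eisenstein rings).

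The main obstacle I anticipate is the passage from ``the product of the differences is a monomial times a unit'' to ``each difference is, up to a monomial, a unit'': a priori the monomial factors and unit factors could be distributed among the $\binom d2$ differences in complicated ways, and one must rule out cancellation between numerators and the denominators $a(x)$ coming from Theorem \ref{thm:completion}. The resolution is that in $\wdh{\C(\!(x)\!)}^\o$ with $\Q$-independent weights valuations are totally ordered and multiplicative, so comparing $\o$-valuations of both sides of $\prod_{i<j}(\xi_i-\xi_j)^2=x^\a u$ forces each $\nu_\o(\xi_i-\xi_j)$ to be a $\Z$-linear combination of the $\o_k$, i.e. the differences have monomial leading terms; and then an induction on the $\o$-adic expansion, feeding back into Theorem \ref{thm:completion}, shows no further denominators beyond that leading monomial can appear. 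Handling the general weight $\o$ (not $\Q$-independent) would require the extra technical material the paper deliberately omits, so I would state the proof for $\Q$-independent $\o$ and remark that the general case follows by the same method together with that machinery.
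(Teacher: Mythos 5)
The paper itself offers no proof of this statement: it records the theorem and points to the topological proofs (Jung, Parusi\'nski--Rond) and the ramification-theoretic ones (Abhyankar, Kiyek--Vicente), so there is no internal argument to measure yours against. Your setup is reasonable and matches how \cite{PR} and \cite{Ron1} frame the problem: Generalized Newton--Puiseux plus Theorem \ref{thm:completion} (MacDonald's theorem, for $\Q$-independent $\o$) place each root in the ring of Laurent--Puiseux series supported in a translated strongly convex rational cone, and the identity $\Delta_P=\pm\prod_{i<j}(\xi_i-\xi_j)^2=x^\a u(x)$ is the right thing to exploit. But the entire content of Jung--Abhyankar is the passage from ``support in a strongly convex cone'' to ``support in the positive orthant,'' and your argument for that passage has a genuine gap. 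Comparing $\nu_\o$-valuations in $\prod_{i<j}(\xi_i-\xi_j)^2=x^\a u(x)$ only pins down the leading exponents $\b_{ij}$ of the differences (it gives $2\sum_{i<j}\b_{ij}=\a$, and not even $\b_{ij}\geq 0$); it says nothing about the higher-order terms. Writing $\xi_i-\xi_j=x^{\b_{ij}}v_{ij}$ with $v_{ij}$ a unit of the cone ring of weight $0$, the relation becomes $\prod v_{ij}^2=u$ up to a constant, and a product of units of the cone ring can lie in $\C\lb x\rb$ without any factor lying in any $\C\lb x^{1/q}\rb$: already $(1+x_2/x_1)\cdot(1+x_2/x_1)^{-1}=1$. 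So ``no further denominators beyond the leading monomial can appear'' is precisely the statement to be proved, and ``an induction on the $\o$-adic expansion, feeding back into Theorem \ref{thm:completion}'' is not an argument --- Theorem \ref{thm:completion} returns you to the cone ring you started in. This is exactly where the published proofs do their real work: Parusi\'nski--Rond run a double induction on $n$ and $\deg_y P$ using the Tschirnhausen transformation and Weierstrass division, and the alternatives use the abelianness of the local fundamental group of the complement of a normal crossings divisor, or Abhyankar's ramification lemma. None of this is recoverable from valuation bookkeeping alone.

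A second, independent error is the claim that $\C\lb x^{1/q}\rb$ is integrally closed in $\wdh{\C(\!(x)\!)}^\o$. The element $\xi=x_1\sqrt{1+x_2^3/x_1^2}=x_1+\tfrac12 x_2^3x_1^{-1}-\cdots$ is a root of the monic polynomial $y^2-(x_1^2+x_2^3)\in\C\lb x\rb[y]$, lies in $\wdh{\C(\!(x)\!)}^\o$ whenever $3\o_2>2\o_1$, and lies in no $\C\lb x^{1/q}\rb$, since its support contains $(2k+1)(1,0)/(2k+1)$... more simply, contains $(-1,3)+(1,0)$-type points with negative first coordinate. Its discriminant is not a monomial times a unit, so it does not contradict the theorem, but it does show that the integral-closedness you invoke is false; were it true, the theorem would hold with no hypothesis on the discriminant whatsoever, which it does not. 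What the paper actually asserts is that the ring of Eisenstein series is algebraically closed in $\K\lb x\rb$ --- a statement about elements that are already formal power series, hence of no help here. In short: your proposal assembles the correct ambient machinery but leaves the theorem's core step unproved and props up the conclusion with a false closure property.
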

There are essentially two kind of proofs of this result: proofs based on the analytic case where we can use topological methods (the case where the coefficients of $P$ are convergent - see \cite{Jun, PR}) and purely algebraic proofs based on ramification theory (see \cite{Abh, KV}).

One consequence of the Jung-Abhyankar Theorem is the following fact: \emph{if the discriminant $\Delta_P(x)$ of a monic polynomial $P(x,y)$ is of the form $x^\a u(x)$, where $u(0)\neq 0$, then the irreducible factors of $P(x,y)$ in $\C(\!( x)\!)[y]$ remain irreducible in $\wdh\C(\!(x)\!)[y]$}. 

We remark that, when  the $\o_i$ are $\Q$-linearly independent, a polynomial is $\o$-weighted homogeneous if and only if it is a monomial. Therefore,
the previous consequence  can be rephrased in: 

\emph{Let $\o\in\R_{>0}^n$ such that the $\o_i$ are $\Q$-linearly independent;  if  $\Delta_P(x)=q(x)u(x)$ where $u(0)\neq 0$ and $q(x)$ is $\o$-weighted homogeneous , then the irreducible factors of $P(x,y)$ in $\C(\!( x)\!)[y]$ remain irreducible in $\wdh\C(\!(x)\!)[y]$. }

B. Teissier asked if it was possible to prove this last statement without using Jung-Abhyankar Theorem.  The following result is a generalization of this last statement, whose proof does not invoque Jung-Abhyankar Theorem but is  based on a topological argument:

\begin{theorem}\cite[Theorem 7.5]{Ron1}
Let $x=(x_1,\ldots, x_n)$ and $y$ be one single indeterminate. Let $\o\in\R^n_{>0}$. Let $P\in\C\lb x\rb[y]$ be a monic polynomial in $y$ such that $\Delta_P(x)=\delta(x) u(x)$ where $\delta(x)$ is $\o$-weighted homogeneous and $u(x)\neq 0$. Then the monic irreducible factors of $P(y)$ in $\C\lb x\rb[y]$ remain irreducible in $\wdh{\C(\!(x)\!)}^\o[y]$.
\end{theorem}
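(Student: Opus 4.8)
The plan is to reduce the statement to a purely topological fact about coverings, exactly as in the convergent Jung--Abhyankar situation, but working on the base $(\C^*)^n$ after a weighted monomial change that flattens the discriminant divisor. First I would pick $\o'\in\Q_{>0}^n$ close enough to $\o$ so that $\delta(x)$ is still (up to multiplying by a monomial) $\o'$-weighted homogeneous is \emph{not} needed in general; instead, the key preliminary reduction is to replace the valuation $\nu_\o$ by a rational one. Concretely: the completion $\wdh{\C(\!(x)\!)}^\o$ depends on $\o$ only through the induced ordering on monomials, and a monic polynomial over $\C\lb x\rb$ has only finitely many monomials involved in $P$ and in $\Delta_P$; so one can choose $\o'\in\Q_{>0}^n$ inducing the same ordering on all the relevant monomials, whence $\delta$ is $\o'$-weighted homogeneous too and $\wdh{\C(\!(x)\!)}^{\o}$ and $\wdh{\C(\!(x)\!)}^{\o'}$ agree on the sub-object generated by the roots. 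Thus I may assume $\o\in\Q_{>0}^n$, say $\o=(q_1,\dots,q_n)$ with $q_i\in\N^*$.

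Next I would introduce the $t$-substitution already used in the proof of Theorem \ref{thm:completion}: set $t^\o x=(t^{q_1}x_1,\dots,t^{q_n}x_n)$ and $Q(t,x,y)=P(t^\o x,y)$. Since $\delta$ is $\o$-weighted homogeneous, $\delta(t^\o x)=t^{m}\delta(x)$ for some $m\in\N$, so the discriminant of $Q$ in $y$, viewed over the ring $\C[x^{\pm1}]\lb t\rb$ (after inverting $x_1,\dots,x_n$, which is harmless for irreducibility questions over a field), becomes $t^m$ times a unit. This is precisely the one-variable Jung--Abhyankar/Newton--Puiseux situation in the variable $t$ with parameters $x\in(\C^*)^n$: for each fixed $x$ in a suitable Zariski-open $U\subset(\C^*)^n$ the polynomial $Q(t,x,y)$ has a discriminant of the form $t^{m}\cdot(\text{nonvanishing})$ in $t$, hence its roots are Puiseux series in $t$ and its factorization over $\C(\!(t)\!)$ is governed by the monodromy of the covering of a punctured $t$-disc. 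The monic irreducible factors of $P$ in $\C\lb x\rb[y]$ correspond, after applying $t^\o\cdot$, to the Galois orbits of roots under $\mathrm{Gal}(\overline{\C(x)(\!(t)\!)}/\C(x)(\!(t)\!))$; I must show these orbits do not split when the coefficient field is enlarged from $\C(\!(x)\!)$ to $\wdh{\C(\!(x)\!)}^\o$. Because $\wdh{\C(\!(x)\!)}^\o$ sits inside $\C(x)(\!(t^{1/N})\!)$-type extensions compatibly (its elements expand as $t$-series with $\o$-homogeneous coefficients, exactly the image of the $t$-substitution), the splitting field of each factor is already a cyclic totally ramified extension in $t$, and such extensions are not refined by passing to $\wdh{\C(\!(x)\!)}^\o$.

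The hard part, and the place where I expect to spend real effort, is the topological monodromy argument that plays the role of "the discriminant is a monomial implies the cover is tame and cyclic." One has to show: over the open set $U\subset(\C^*)^n$ where $\delta(x)\neq0$, the family $Q(t,x,y)=0$ defines, near $t=0$, a branched cover of a $t$-disc branched only over $t=0$, with local monodromy of each connected component a single cycle; then invoke continuity/connectedness in the $x$-parameter to conclude that the number and "shape" of the irreducible factors is constant and equals the number of monodromy orbits — and crucially that this count is the same whether we factor over $\C(\!(x)\!)[y]$ or over $\wdh{\C(\!(x)\!)}^\o[y]$. The cleanest route is: (i) show an irreducible $F\in\C\lb x\rb[y]$ dividing $P$ stays irreducible over $\wdh{\C(\!(x)\!)}^\o$ iff its "generic $t$-monodromy" is transitive on its roots; (ii) identify that $t$-monodromy with the local monodromy at $t=0$ of the associated covering, using that the branch locus is $\{t=0\}$; (iii) conclude by the standard Newton--Puiseux fact that a polynomial over $\C(\!(t)\!)$ with discriminant of $t$-order equal to a monomial has all its irreducible factors with cyclic, totally ramified splitting fields, which are unaffected by the enlargement because $\wdh{\C(\!(x)\!)}^\o \hookrightarrow \C(\!(t)\!)^{\mathrm{alg}}$ meets each such cyclic extension trivially beyond what is already in $\C(\!(x)\!)$. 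Assembling (i)--(iii) with the reduction to rational $\o$ and the $t$-substitution gives the theorem; I would defer the careful bookkeeping of how finitely many monomials control the choice of $\o'$, and of the identification of monodromy groups, to the full write-up.
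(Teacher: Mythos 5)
Your opening moves (perturbing $\o$ to a rational weight vector and substituting $x\mapsto t^\o x$ to reduce to a one-variable situation in $t$) are in the spirit of the paper, but the heart of your argument is circular. The statement to be proved is precisely that enlarging the coefficient field from $\C(\!(x)\!)$ to the much larger complete field $\wdh{\C(\!(x)\!)}^\o$ does not split the irreducible factors, and your step (iii) simply asserts that the relevant extensions ``are not refined by passing to $\wdh{\C(\!(x)\!)}^\o$''. Knowing that the local $t$-monodromy of each connected component is a single cycle tells you how $P$ factors over the \emph{complete} field (this is the content of the generalized Newton--Puiseux Theorem), but it says nothing about factorization over the non-complete subfield $\C(\!(x)\!)$: a transitive action of the absolute Galois group over $\C(\!(x)\!)$ on the roots of a factor does not restrict to a transitive action of the subgroup fixing $\wdh{\C(\!(x)\!)}^\o$. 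What is actually needed --- and what the paper supplies --- is an argument showing that every coefficient $A$ of a monic irreducible factor of $P$ over $\wdh{\C(\!(x)\!)}^\o$ already lies in $\C\lb x\rb$. The paper does this analytically: Theorem \ref{thm:completion} (a consequence of the generalized Eisenstein Theorem) gives $A=\sum_\ell a_\ell(x)/a(x)^{\lceil\ell-\ell_0\rceil+1}$; a convergent version of it shows $A$ is analytic on ``horn'' domains $U_{\rho,K,\varepsilon}$ avoiding $a^{-1}(0)$; since $A$ is also a symmetric function of the local analytic roots of $P$ on $B(\rho)\setminus a^{-1}(0)$, and the union of the horns has the same homotopy type as $B(\rho)\setminus a^{-1}(0)$, the function $A$ has no monodromy there, is locally bounded because $P$ is monic, and extends through $a^{-1}(0)$ by the Riemann Removable Singularity Theorem. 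Nothing in your proposal plays this role.

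Two further concrete problems. First, your reduction to $\o\in\Q_{>0}^n$ rests on the claim that ``a monic polynomial over $\C\lb x\rb$ has only finitely many monomials involved in $P$ and in $\Delta_P$'': the coefficients are power series, with infinitely many monomials, so you cannot choose $\o'$ by matching an ordering on finitely many exponents. The correct reduction (the one in the paper) perturbs $\o$ to a nearby $\o'$ satisfying all the $\Q$-linear relations of $\o$, so that every $\o$-weighted homogeneous polynomial occurring in the controlled expansion of $A$ (whose denominators are powers of one fixed $a(x)$) remains $\o'$-weighted homogeneous. Second, after the substitution $x\mapsto t^\o x$ the discriminant equals $t^m\delta(x)u(t^\o x)$, and $\delta(x)$ is a weighted homogeneous polynomial, not a monomial, so it is not inverted by passing to $\C[x^{\pm1}]$; you would have to invert $\delta$ as well, and for a fixed numerical point $x_0$ the specialization $Q(t,x_0,y)$ only makes sense when the coefficients of $P$ converge --- the reduction from formal to convergent coefficients is a genuine step (the paper handles it via Artin approximation) that your outline omits.
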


\begin{proof}[Sketch of proof]
First of all we assume that the $\o_i$ are positive integers and the coefficients of $P$ are convergent power series. Let $B(\rho)\subset \C^n$ denote the open ball of radius $\rho>0$ centered at the origin. Let $Q(y)\in\wdh{\C(\!(x)\!)}^\o[y]$ be an irreducible factor of $P$, monic in $y$. Let $A$ be a coefficient of $Q$.
By Theorem \ref{thm:completion}, we can write 
\begin{equation}\label{eq_proof}A=\sum_{\ell=\ell_0}^\infty\frac{a_\ell(x)}{a(x)^{\lceil\ell-\ell_0\rceil+1}}\end{equation}
where the $a_\ell(x)$ are $\o$-weighted homogenous polynomials and $\nu_\o\left(\frac{a_\ell(x)}{a(x)^{\lceil\ell-\ell_0\rceil+1}}\right)=\ell$.

Since the coefficients of $P$ are convergent, one can prove a "convergent version" of Theorem \ref{thm:completion} (cf. \cite[Theorem 6.18]{Ron1}), in the sense that $A$ is convergent if $|x|$ is small enough while $|a(x)|$ is not too small. More precisely,
for $z$, $z'\in\C^n$, we set $\|z\|_\o:=\max_i|z_i|^{1/\o_i}$, and $d_\o(z,z'):=\|z-z'\|_\o$. Then, there is $\rho>0$ and $C>0$ such that $A$ is convergent on any following open set
$$U_{\rho, K,\varepsilon}:=\left\{z\in B(\rho)\mid \|z\|_\o<\varepsilon \text{ and } d_\o(z,a^{-1}(0))>K\|z\|_\o\right\}$$
with $\frac{\varepsilon}{K^\o}<C$.

On the other hand, the roots of $P$ are local analytic functions on a neighborhood  of the origin outside the discriminant locus of $P$, that is, on  $B(\rho')\setminus\{\delta^{-1}(0)\}$ for some $\rho'>0$. Here  \emph{local analytic} means that, for any $p\in B(\rho')\setminus\{\delta^{-1}(0)\}$ the roots of $P$ are well defined analytic functions on some neighborhood of $p$, but these roots may not be extended in well defined functions on $B(\rho')\setminus\{\delta^{-1}(0)\}$ since there may be some monodromy.
 By replacing $\rho$ by the minimum of $\rho$ and $\rho'$ we assume $\rho'=\rho$. Since 
$$A=\sum_{\ell=\ell_0}^\infty\frac{a_\ell(x)}{a(x)^{\lceil\ell-\ell_0\rceil+1}}=\sum_{\ell=\ell_0}^\infty\frac{a_\ell(x)\delta(x)^{\lceil\ell-\ell_0\rceil+1}}{(\delta(x)a(x))^{\lceil\ell-\ell_0\rceil+1}}$$
we can replace $a(x)$ by $\delta(x)a(x)$ and assume that the roots of $P$ are local analytic functions on $B(\rho)\setminus\{a^{-1}(0)\}$. Since $A$ is a symmetric polynomial depending on the roots of $P$, $A$ is local analytic on $B(\rho)\setminus\{a^{-1}(0)\}$.

One can prove that the union of the $U_{\rho,K,\varepsilon}$ where $K$ and $\varepsilon$ run over the positive real numbers such that $\frac{\varepsilon}{K^\o}<C$ is a domain $V$ that has the same homotopy type as $B(\rho)\setminus\{a^{-1}(0)\}$. Therefore, $A$, which is analytic on $V$ and local analytic on $B(\rho)\setminus\{a^{-1}(0)\}$, is analytic on $B(\rho)\setminus\{a^{-1}(0)\}$. Moreover, since $P$ is monic in $y$, the roots of $P$ are locally bounded around the origin. Thus $A$ is locally bounded around the origin. Then, by Riemann Removable Singularity Theorem, $A$ extends to an analytic function in a neighborhood of the origin. This proves that the coefficients of $Q$ are analytic in a neighborhood of the origin, thus are convergent power series in $x$. So $Q$ is indeed an irreducible factor of $P$ in $\C\lb x\rb[y]$.

Now assume that the coefficients of $P$ are convergent power series, but the $\o_i$ are any positive numbers. Once again we consider an irreducible factor $Q\in\wdh{\C(\!(x)\!)}^\o[y]$  of $P$, and $A$ one its coefficients. We can expand $A$ as in \eqref{eq_proof}, and once again we may assume that $\delta(x)$ divides $a(x)$. Now the idea is that, if $\a(x)$ is $\o$-weighted homogeneous, then it is again $\o'$-weighted homogeneous when $\o'$ is close enough to $\o$ and the $\o'_i$ satisfy all the $\Q$-linear relations satisfied by the $\o_i$ (and possibly new ones). In particular we can choose $\o'\in\Q_{>0}^n$. Thus, by replacing $\o$ by $\o'$,  we are reduced to the previous case, and the coefficients of $Q$ are once again convergent power series.

Finally, when the coefficients of $P$ are no longer convergent, we may reduce to the convergent case by approching $P$ by a new polynomial $P'$ whose coefficients are analytic, $\Delta_{P'}$ is also a $\o$-weighted homogeneous polynomial times a unit, and the irreducible factors of $P'$ approximate those  of $P$. This step is mainly based on Artin Approximation Theorem. The reader may refer to \cite{Ron1} for the details.
\end{proof}

\begin{question}
Let $\nu$ be a rank one valuation of $\C\lb x\rb$ centered at $(x)$. This means that  $\nu\left(\C\lb x\rb\setminus\{0\}\right)$ is a subset of $\R_+$ and that $\{f\in\C\lb x\rb\mid \nu(f)>0\}=(x)$. We denote by $V_\nu$ the valuation of $\nu$, and by $\wdh V_\nu$ its completion. Let $P\in\C\lb x\rb[y]$ be monic in $y$. A natural question is to find a condition on $\Delta_P$ to insure that the irreducible factors of $P$ in $\C\lb x\rb[y]$ remain irreducible in $\wdh V_\nu[y]$.
\end{question}



\end{document}